\documentclass[a4paper,UKenglish, cleveref,autoref, thm-restate]{lipics-v2021}% add anonymous to the []

\usepackage{mathpartir}
\usepackage{graphicx,graphbox}
\usepackage{multicol}

\pdfoutput=1 %uncomment to ensure pdflatex processing (mandatatory e.g. to submit to arXiv)
\hideLIPIcs  %uncomment to remove references to LIPIcs series (logo, DOI, ...), e.g. when preparing a pre-final version to be uploaded to arXiv or another public repository
 
\title{Override and Update in Restriction Categories}

\author{Jean-Simon Pacaud Lemay}{Macquarie University, Australia}{js.lemay@mq.edu.au}{}{This material is based upon work supported by the AFOSR under award number FA9550-24-1-0008.}

\author{Chad Nester}{University of Tartu, Estonia}{nester@ut.ee}{}{Chad Nester was supported by Estonian Research Council grant PRG2764.}

\authorrunning{J.S.P. Lemay, Chad Nester} %TODO mandatory. First: Use abbreviated first/middle names. Second (only in severe cases): Use first author plus 'et al.'

\Copyright{Jean-Simon Pacaud Lemay, Chad Nester} %TODO mandatory, please use full first names. LIPIcs license is "CC-BY";  http://creativecommons.org/licenses/by/3.0/

\ccsdesc[500]{Theory of computation~Categorical semantics}
\ccsdesc[500]{Theory of computation~Logic}

\keywords{Override; Update; Restriction Categories} %TODO mandatory; please add comma-separated list of keywords

\category{} %optional, e.g. invited paper

\relatedversion{} %optional, e.g. full version hosted on arXiv, HAL, or other respository/website
\nolinenumbers %uncomment to disable line numbering

\EventEditors{John Q. Open and Joan R. Access}
\EventNoEds{2}
\EventLongTitle{42nd Conference on Very Important Topics (CVIT 2016)}
\EventShortTitle{CVIT 2016}
\EventAcronym{CVIT}
\EventYear{2016}
\EventDate{December 24--27, 2016}
\EventLocation{Little Whinging, United Kingdom}
\EventLogo{}
\SeriesVolume{42}
\ArticleNo{23}
\newcommand{\X}{\mathbb{X}}

\newcommand{\rest}[1]{\overline{#1}}
\newcommand{\override}{\triangleright}
\newcommand{\update}{\diamond}

\newcommand{\blackdiamond}{%
  \mathord{%
    \sbox0{$\diamond$}%
    \resizebox{!}{1.125\ht0}{%
      \raisebox{\depth}{\rotatebox[origin=c]{45}{$\blacksquare$}}%
    }%
  }%
}

\begin{document}
\allowdisplaybreaks

\maketitle

\begin{abstract}
We study the override and update operators on partial functions from the perspective of restriction categories. We propose a definition of override restriction categories, in which both of the operators in question exist. We prove a number of results concerning these operators and their relationship to the structure of the ambient restriction category, as well as relating them to the existing literature on the override and update operators. We provide various examples of override restriction categories and in particular show that every classical restriction category is an override restriction category. 
\end{abstract}  

\section{Introduction}\label{sec:intro}

There are not very many sensible ways to combine arbitrary partial functions with the same domain and codomain. One way is to take their \emph{meet} by viewing the partial functions in question as sets of input-output pairs and taking the intersection of these sets. We might guess that the \emph{join} is defined similarly, by taking the union of associated sets of input-output pairs, but in general this does not yield a partial function. The obstacle is that the union of these sets need not correspond to a function, since a given element of the domain need not be mapped to the same element of the codomain by both of the functions in question.

One way to resolve this difficulty is to consider a sort of biased join operation, in which conflicts of this sort are resolved by doing what, say, the left argument does. This allows us to again obtain a partial function. The resulting binary operation on partial functions is referred to as the \emph{override} operator \cite{Berendsen2010}, or sometimes the (left-)\emph{preferential union} operator. Explicitly, if $f,g : A \to B$ are partial functions then we define a partial function $f \override g : A \to B$ as doing $f$ where $f$ is defined and doing $g$ where $f$ is undefined. There is another closely related binary operator referred to as the \emph{update} operator \cite{Berendsen2010} that sends partial functions $f,g : A \to B$ to the partial function $f \update g : A \to B$, pronounced ``$f$ update $g$'', defined by doing $g$ when both $g$ and $f$ are defined, while doing $f$ when $g$ is undefined and $f$ is defined. 

It is helpful to depict these operations as patterned Venn diagrams (see Figure~\ref{fig:venn-diagrams}). A given partial function is represented by a region of space, corresponding to its domain of definition, which is filled with an associated ``pattern'' or ``colour'' indicating the action of the partial function in question. Then the region of $f \override g$ is the union of the regions of $f$ and $g$, with the pattern of $f$ dominating the pattern of $g$. Similarly, the region of $f \update g$ is the region of $f$, but with the pattern of $g$ dominating the pattern of $f$.

\begin{figure}
\setlength{\tabcolsep}{12pt}
\begin{center}
\begin{tabular}{cccc}
  \includegraphics[height=2cm,align=c]{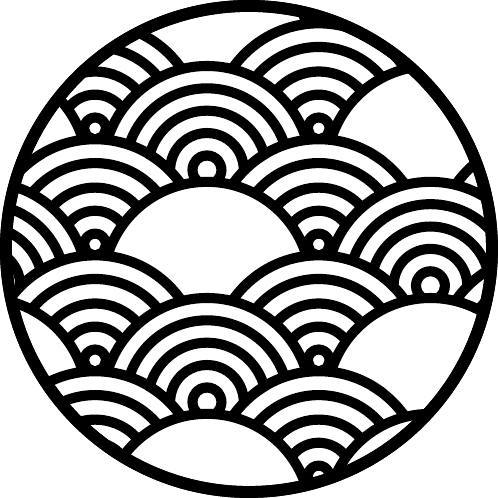}
  &
  \includegraphics[height=2cm,align=c]{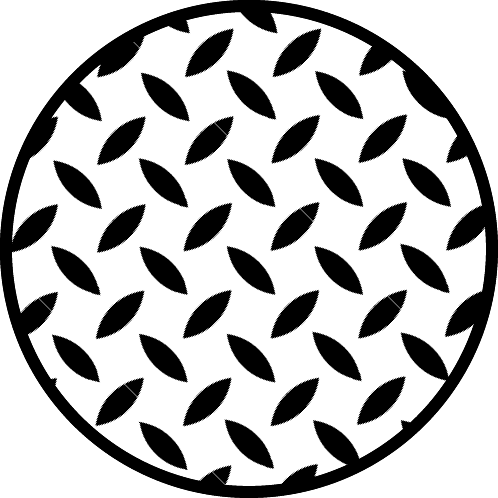}
  &
  \includegraphics[height=2cm,align=c]{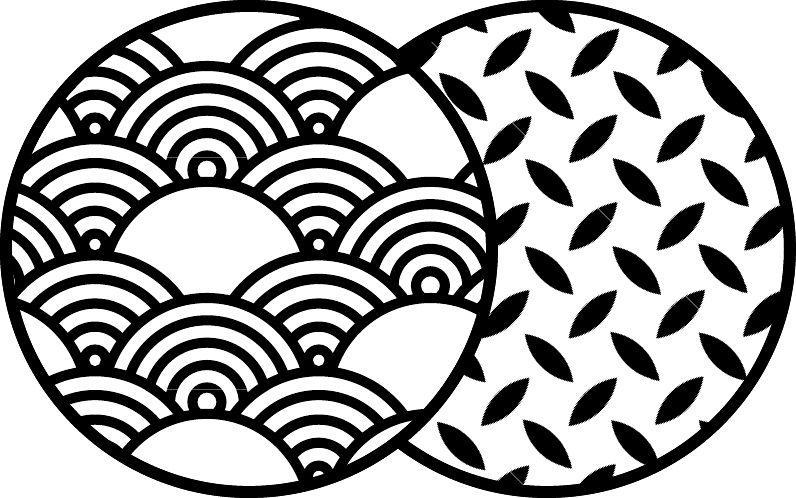}
  &
  \includegraphics[height=2cm,align=c]{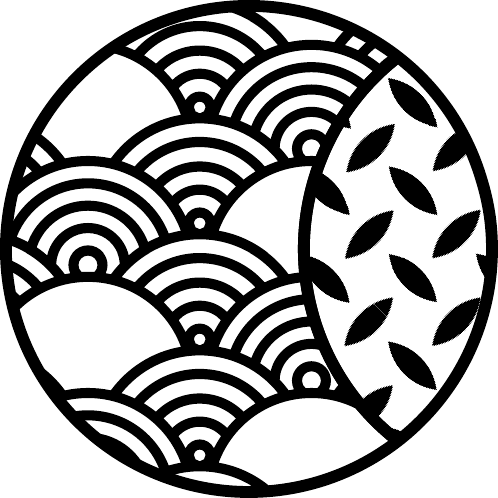}
  \\
  $f : A \to B$
  &
  $g : A \to B$
  &
  $f \override g : A \to B$
  &
  $f \update g : A \to B$
\end{tabular}
\end{center}
\setlength{\tabcolsep}{6pt} % reset to default value
\caption{Patterned Venn diagrams explaining the override and update operators.}\label{fig:venn-diagrams}
\end{figure}

It is algebraically natural to consider the override and update operations together. In particular, Jackson and Stokes have given a complete algebraic axiomatisation of the behaviour of override and update in semigroup theory~\cite{Jackson2021}, building on earlier works of Berendsen et al.~\cite{Berendsen2010} and Cvetko-Vah et al.~\cite{Cvetko2013}. The override and update operators have many applications and are commonly encountered in formal methods for program correctness~\cite{bakker1980mathematical}. For example, the override operator appears in the formal specification languages Z~\cite{Spivey1992} and VDM~\cite{Jones1990}, while the update operator appears in the denotational semantics of assignments~\cite{Berendsen2010}. 

In this paper, we study override and update from the perspective of the theory of restriction categories~\cite{Cockett02}. Restriction categories, which we review in Sec~\ref{sec:restriction}, are abstract categories of partial maps, and provide a natural setting in which to study operations on partial functions. 

In Sec~\ref{sec:override}, we introduce the notion of an override operator for restriction categories, and call a restriction category equipped with an override operator an override restriction category (Def~\ref{def:override-rest-cat}). We provide a list of various basic identities that hold for the override operator (Lem~\ref{lem:override-facts}) and also show that every override restriction category has binary joins (Thm~\ref{thm:override-joins}) in the restriction category sense (Def~\ref{def:join}). 

In Sec~\ref{sec:update} we define the induced update operator (Def~\ref{def:update}) of an override restriction category, where we define the update operator using the override operator and the restriction operator. We then that show that every homset of an override restriction category satisfies the axioms for override and update given by Jackson and Stokes~\cite{Jackson2021} (Thm~\ref{thm:ou-algebra}). This helps justify that we have the appropriate definitions for these operators for restriction categories. 

In Sec~\ref{sec:starting-from-update} we take the approach of instead starting by defining the notion of an update operator independently in a restriction category \textit{with binary joins}. We call a restriction with binary joins and an update operator, an update restriction category (Def~\ref{def:update-rest-cat}). We then explain how to define an override operator using the update operator and the join (Prop~\ref{prop:update-to-override}). From this, we are then able to show that in fact, update restriction categories are precisely the same as override restriction categories (Thm~\ref{thm:override=update}). We will also discuss how the behaviour of the override and update operators affect the ambient restriction category. 

We provide various examples of override restriction categories including sets and partial functions (Ex~\ref{ex:par-override} \& \ref{ex:par-update}), rings and non-unital ring morphisms (Ex~\ref{ex:cring-override} \& \ref{ex:cring-update}), and any upper-bounded distributive lattice (Ex~\ref{ex:lattice-override} \& \ref{ex:lattice-update}). In Sec~\ref{sec:classical}, we show that every classical restriction category \cite{Cockett2009Boolean} is an override restriction category (Thm \ref{thm:classical-override}). Furthermore, we provide an example of a restriction category that has more than one override operator (Ex~\ref{ex:override-not-unique}), which was found using the help of Mace4~\cite{Prover9Mace4}. 

There are a number of promising directions for future work. For example,  a further operator on partial functions, also strongly related to the override operator, is the \emph{restricted union} $f \curlyvee g = (f \override g) \wedge (g \override f)$ where $\wedge$ indicates the meet of the partial functions in question~\cite{Stokes2024}. Restriction categories with meets are relatively well-studied~\cite{Guo2012,Cockett2012-2,Nester2024Thesis}, and we imagine that studying the restricted union operator in override restriction categories with meets would be a natural extension of the work presented here.

\textbf{Conventions:} We assume that the reader is familiar with the basics of category theory. For an arbitrary category $\mathbb{X}$, we will denote objects using capital letters $A$, $B$, $C$ etc., homsets will be denoted as $\mathbb{X}(A,B)$ and maps will be denoted by lowercase letters $f,g,h, etc. \in \mathbb{X}(A,B)$. Arbitrary maps will be denoted using an arrow $f: A \to B$, identity maps as $1_A: A \to A$, and for composition we will use \emph{diagrammatic} notation, that is, the composition of $f: A \to B$ followed by $g: B \to C$ is denoted as $fg: A \to C$. Throughout the paper, we will encounter various binary operations, such as $\override$, $\update$, $\vee$, etc. Composition is assumed to have a higher precedence than everything else, for example $fg \override hk = (fg) \override (hk)$.

\section{Restriction Categories}\label{sec:restriction}

In this background section, we briefly review the basics of restriction categories, some key concepts such as the canonical preorder and binary joins, which will play key roles for the story of this paper, as well as our main running examples. For a deeper introduction to restriction categories, see \cite{Cockett02,Cockett2023,Cockett2009Boolean}. 

\begin{definition}\cite[Sec 2.1.1]{Cockett02}\label{def:restcat} A \textbf{restriction category} is a category $\X$ equipped with a family of unary operations $\rest{-} : \X(A,B) \to \X(A,A)$ (for all pairs of objects $A$ and $B$) satisfying:
  \begin{enumerate}[{\bf [R.1]}]
  \item \label{R1} $\rest{f}f = f$ for all $f \in \X(A,B)$
  \item \label{R2} $\rest{f}\rest{g} = \rest{g}\rest{f}$ for all $f \in \X(A,B)$ and $g \in \X(A,C)$
  \item \label{R3} $\rest{\rest{f}g} = \rest{f}\rest{g}$ for all $f \in \X(A,B)$ and $g \in \X(A,C)$
  \item \label{R4} $f\rest{g} = \rest{fg}f$ for all $f \in \X(A,B)$ and $g \in \X(B,C)$
  \end{enumerate}
We call $\rest{f}$ the \textbf{restriction idempotent} of $f$. Furthermore, in a restriction category $\X$, for maps $f,g \in \mathbb{X}(A,B)$, we say that: 
\begin{enumerate}[{\em (i)}]
\item $f$ is \textbf{total} if $\rest{f} = 1_A$;
\item $g$ \textbf{extends} $f$, written $f \leq g$, if $\rest{f}g =f$;
\item $f$ and $g$ are \textbf{compatible}, written $f \smile g$ if $\rest{g}f = \rest{f}g$. 
   \end{enumerate}
\end{definition}

The main intuition for a restriction category is that maps $f$ are partial and the restriction $\rest{f}$ captures the domain of definition of $f$. So a map $f$ being total is interpreted as $f$ being everywhere defined. Moreover, the name restriction idempotent is justified since $\rest{f}$ is indeed an idempotent. Now $f \leq g$ is interpreted as saying that whenever $f$ is defined, $g$ is also defined and is equal to $f$, while $f \smile g$ means that whenever they are both define, they are equal. In fact, $\leq$ is an actual partial order, making each homset a poset and every restriction category poset-enriched. As such, we can consider various poset related concepts of maps in restriction category such as, in particular, joins of maps. However, intuitively speaking, for the join of maps $f$ and $g$ to make sense, $f$ and $g$ need to be equal where they are defined, in other words, we can only take joins of compatible maps. 

\begin{definition}\label{def:join}\cite[Def 6.7 \& Def 10.1]{Cockett2009Boolean} A restriction category $\X$ is said to have \textbf{binary joins} in case for all $f,g \in \X(A,B)$ with $f \smile g$, there is a map $f \vee g \in \X(A,B)$ such that:
\begin{enumerate}[{\bf [J.1]}]
  \item $f \leq f \vee g$ and $g \leq f \vee g$, that is, $\rest{f}(f\vee g) = f$ and $\rest{g}(f\vee g) = g$;
  \item For any $h \in \X(A,B)$, $f \leq h$ and $g \leq h$, then $f \vee g \leq h$; 
  \item For any $k \in \X(A^\prime,A)$, $k(f \vee g) = kf \vee kg$. 
    \end{enumerate}
\end{definition}

So for $f \smile g$, we interpret $f \vee g$ as doing $f$ where $f$ is defined and not $g$, doing $g$ where $g$ is defined and not $f$, and since $f \smile g$, when they are both define we can do either $f$ or $g$ since they are equal then (and of course being undefined where both are undefined). Now \textbf{[J.1]} and \textbf{[J.2]} says $f \vee g$ is indeed the join in the usual poset sense, while \textbf{[J.3]} tells us that joins are preserved by pre-composition. In fact, it follows that joins are also preserved by post-composition and the restriction idempotent of the join is the join of the restriction idempotents. Moreover, having binary joins means having joins of any non-empty finite families of compatible maps. We will discuss empty joins in Sec~\ref{sec:classical}. 

Here are now our main running examples of restriction categories (with binary joins). For a list of many other examples of restriction categories, see \cite[Sec 2.1.3]{Cockett02}.

\begin{example}\label{ex:par-rest-cat}
  The prototypical restriction category is the category $\mathsf{Par}$ of sets and partial functions, where for a partial function $f : X \to Y$, its restriction idempotent $\rest{f} : X \to X$ is defined as follows: $\rest{f}(x) =
    \begin{cases}
      x &\text{ if } f(x)\downarrow\\
      \uparrow &\text{ if } f(x)\uparrow
    \end{cases}$, where $\downarrow$ means defined and $\uparrow$ means undefined. The total maps correspond precisely to total functions in the usual sense. For partial functions $f: X \to Y$ and $g: X \to Y$, we have $f \leq g$ if $g(x)=f(x)$ whenever $f(x) \downarrow$, while $f \smile g$ if $f(x)=g(x)$ whenever both $f(x) \downarrow$ and $g(x) \downarrow$. Moreover, $\mathsf{Par}$ also has binary joins where for compatible partial functions $f: X \to Y$ and $g: X \to Y$, their join is the partial function $f \vee g: X \to Y$ defined as follows: 
\[ (f \vee g) (x) = \begin{cases} f(x) & \text{ if } f(x) \downarrow \text{ and } g(x) \uparrow \\
g(x) & \text{ if } f(x) \uparrow \text{ and } g(x) \downarrow \\
f(x)=g(x) & \text{ if } f(x) \downarrow \text{ and } g(x) \downarrow \\
\uparrow & \text{ if } f(x) \uparrow \text{ and } g(x) \uparrow  \end{cases} \]
\end{example}

\begin{example}\label{ex:trivial-rest-cat} Trivially, any category $\mathbb{X}$ is a restriction category where for every $f\in \mathbb{X}(A,B)$, $\rest{f} = 1_A$. Then every map is total, and moreover $f \leq g$ if and only if $f \smile g$ if and only if $f=g$. As such, we trivially have binary joins where $f \vee f = f$. 
\end{example}

\begin{example}\label{ex:cring-rest-cat} Let $\mathsf{Cring}_\bullet$ be the category of commutative rings and \textit{non-untial} ring morphisms, that is, functions $f: R\to S$ which preserve addition, $f(x+y) = f(x) + f(y)$, zeroes, $f(0) =0$, and multiplication, $f(xy) = f(x)f(y)$, but may not necessarily preserve the multiplicative unit, so $f(1)$ may not equal $1$. Then its opposite category $\mathsf{Cring}^{op}_\bullet$ is a restriction category, so $\mathsf{Cring}_\bullet$ is a \textit{co}restriction category, where for a non-untial ring morphism $f: R\to S$, its corestriction idempotent $\rest{f}: S \to S$ is defined as $\rest{f}(x) = f(1)x$. The total maps correspond precisely to the maps that do preserve the multiplicative unit, $f(1)=1$, in other words, actual ring morphisms. Moreover, $f \leq g$ if $f(1)g(x) = f(x)$, while $f \smile g$ if $g(1)f(x) = f(1) g(x)$. Moreover, $\mathsf{Cring}_\bullet$ also has binary joins where for compatible non-unital ring morphisms $f: R\to S$ and $g: R\to S$, their join is the non-unital ring morphism $f \vee g: R \to S$ defined as $(f \vee g)(x) = f(x) + g(x) - g(1)f(x) = f(x) + g(x) - f(1) g(x)$. 
\end{example}

\begin{example}\label{ex:meet-rest-cat} Every meet semi-lattice $(L, \wedge, \top)$ defines a restriction category with one object, whose maps are the elements of $L$, composition is given by the meet, $xy= x \wedge y$, the identity is the top element $\top$, and the every element is its own restriction idempotent, $\rest{x} = x$. Here, the only total map is the top element $\top$, everything is compatible, so $x \smile y$ for all $x,y \in L$, and the restriction category partial order corresponds to the usual semi-lattice partial order, that is, $x \leq y$ if and only if $x \wedge y = x$. If $(L, \wedge, \vee, \top)$ is an upper-bounded distributive lattice, then this gives a restriction category with binary joins which coincide with the lattice's joins. 
\end{example}

%More generally, the above example is an example of a \textbf{restriction monoid}, which can be defined as a one object restriction category or more explicitly as a monoid $M$ equipped with a unary operation $\rest{(-)}$ such that (1) $\rest{x} x = x$, (2) $\rest{x}~\rest{y} = \rest{y}~\rest{x}$, (3) $\rest{\rest{x}y} = \rest{x} ~\rest{y}$, and (4) $x\rest{y} = \rest{xy} x$, for all $x, y \in M$. 

We conclude this background section by recalling a number of useful basic identities we will require going forward:
\begin{lemma}[\cite{Cockett02,Cockett2023,Cockett2009Boolean}]\label{lem:restriction-basics}
  In a restriction category $\X$, we have:
  \begin{enumerate}[{\em (i)}]
  \item \label{lem:restriction-basics.idempotent}$\rest{f}~\rest{f} = \rest{f}$ for all $f \in \X(A,B)$
  %\item $\rest{fg} = \rest{f\rest{g}}$ for all $f \in \X(A,B)$ and $g \in \X(A,C)$
  \item \label{lem:restriction-basics.doublebar} $\rest{\rest{f}} = \rest{f}$ for all $f \in \X(A,B)$
  %\item $\rest{1_A} = 1_A$ for all objects $A$ of $\X$
  %\item \label{lem:restriction-basics.poset} For all objects $A$ and $B$, $\leq$ is a preorder on $\X(A,B)$
   \item \label{lem:restriction-basics.rest.leq.id} For all $f \in \X(A,A)$, $f \leq 1_A$ if and only if $f = \rest{f}$ .
 % \item \label{lem:restriction-basics.restf=restg} For all $f \in \X(A,B)$ and $g \in \X(A,C)$, if $\rest{f}g = g$ and $\rest{g}f = f$ then $\rest{f} = \rest{g}$.
  \item \label{lem:restriction-basics.leq.smile} For all $f, g \in \X(A,B)$, if $f \leq g$, then $f \smile g$. 
  \item \label{lem:restriction-basics.rest.smile} For all $f \in \X(A,B)$ and $g \in \X(A,C)$, $\rest{f} \smile \rest{g}$.
   \end{enumerate}
   Furthermore, if $\X$ also has binary joins, we have: 
   \begin{enumerate}[{\em (i)}]
   \setcounter{enumi}{5}
   \item \label{lem:restriction-basics.rest.join.rest} $\rest{f \vee g} = \rest{f} \vee \rest{g}$ for all $f,g \in \X(A,B)$ such that $f \smile g$.
   \item \label{lem:restriction-basics.rest.join.composition} $(f \vee g)h = fh \vee gh$ for all $f,g \in \X(A,B)$ such that $f \smile g$, and $h \in \X(B,C)$.
   \item \label{lem:restriction-basics.rest.join.assoc} $(f \vee g) \vee h = f \vee (g \vee h)$ for all $f,g, h \in \X(A,B)$ such that $f \smile g \smile h$.
   \item \label{lem:restriction-basics.rest.join.commutative} $f \vee g = g \vee f$ for all $f,g \in \X(A,B)$ such that $f \smile g$. 
   %\item \label{lem:restriction-basics.rest.join.idempotent.1} $f \vee f = f$ for all $f \in \mathbb{X}(A,B)$.
   \item \label{lem:restriction-basics.rest.join.idempotent.2} $\overline{g}f \vee f = f$ for all $f \in \mathbb{X}(A,B)$ and $g \in \mathbb{X}(A,C)$. 
   \end{enumerate}
\end{lemma}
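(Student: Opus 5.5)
Most of these identities are standard. I would prove each one directly from the axioms \textbf{[R.1]}--\textbf{[R.4]} and \textbf{[J.1]}--\textbf{[J.3]}, in the order listed, reusing earlier items as I go.

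\emph{Items (i)--(v).}
\begin{itemize}
\item For (i), apply \textbf{[R.3]} with $g=f$ to get $\rest{\rest{f}f}=\rest{f}\,\rest{f}$. The left side equals $\rest{f}$ by \textbf{[R.1]}.
\item For (ii), \textbf{[R.3]} with $g=1$ gives $\rest{\rest{f}}=\rest{f}\,\rest{1}$. Now $\rest{1}=\rest{1}1=1$ by \textbf{[R.1]}, so $\rest{\rest{f}}=\rest{f}$.
\item For (iii), $f\leq 1$ means $\rest{f}1=f$, i.e.\ $f=\rest{f}$. Conversely, if $f=\rest{f}$ then $\rest{f}1=\rest{f}=f$.
\item For (iv), suppose $\rest{f}g=f$. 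Then $\rest{f}=\rest{\rest{f}g}=\rest{f}\,\rest{g}$ by \textbf{[R.3]}. Hence $\rest{g}f=\rest{g}\,\rest{f}g=\rest{f}\,\rest{g}g=\rest{f}g$, using \textbf{[R.2]} and \textbf{[R.1]}.
\item For (v), compute $\rest{\rest{g}}\,\rest{f}=\rest{g}\,\rest{f}=\rest{f}\,\rest{g}=\rest{\rest{f}}\,\rest{g}$ using (ii) and \textbf{[R.2]}.
\end{itemize}

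\emph{Items (vi)--(x), with binary joins.}
\begin{itemize}
\item For (vii), I would first establish the inequalities. From $f\leq f\vee g$ we get $fh\leq(f\vee g)h$, since $\rest{fh}(f\vee g)h=\rest{fh}f\,\ldots$; more cleanly, use $\rest{fh}\leq\rest{f}$ together with $\rest{fh}f=f\rest{h}$ from \textbf{[R.4]}. This gives $fh\vee gh\leq(f\vee g)h$ by \textbf{[J.2]} (note $fh\smile gh$ follows similarly).
\item For the reverse inequality in (vii), I would use \textbf{[J.3]} with $k=\rest{(f\vee g)h}$. Since $\rest{fh}\vee\rest{gh}$ should equal $\rest{(f\vee g)h}$, both sides agree on that domain. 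The key trick is that two maps $u\leq v$ with $\rest{u}=\rest{v}$ are equal.
\item So the actual order is: prove (vi) first, then (vii). For (vi), $\rest{f}\leq\rest{f\vee g}$ and $\rest{g}\leq\rest{f\vee g}$ are immediate. The reverse uses \textbf{[J.3]}: $(\rest{f}\vee\rest{g})(f\vee g)=\rest{f}(f\vee g)\vee\rest{g}(f\vee g)=f\vee g$, by \textbf{[J.1]}. Then $\rest{f\vee g}\leq\rest{f}\vee\rest{g}$ follows via \textbf{[R.3]}, since restriction idempotents below $1$ compose as meets.
\item For (vii), compare restrictions. We have $\rest{(f\vee g)h}=\rest{(f\vee g)\rest{h}}$. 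Writing $(f\vee g)\rest{h}=\rest{fh}f\vee\rest{gh}g$ via \textbf{[R.4]} and \textbf{[J.3]}-style manipulation, and then applying (vi), gives $\rest{fh}\vee\rest{gh}$. This equals $\rest{fh\vee gh}$ by (vi), so the two sides coincide.
\item Items (viii) and (ix) follow purely from the poset characterisation \textbf{[J.1]}--\textbf{[J.2]}: joins are least upper bounds, hence unique, associative and commutative. One must check the needed compatibilities, e.g.\ $f\smile g\vee h$; that follows from compatibility being preserved under joins, shown via \textbf{[J.3]}.
\item For (x), we have $\rest{g}f\leq f$, so $\rest{g}f\smile f$ by (iv). Both are $\leq f$, and $f\leq \rest{g}f\vee f$, so the join equals $f$.
\end{itemize}

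The main obstacle is (vi)/(vii): handling postcomposition and restrictions of joins using only precomposition stability \textbf{[J.3]}. The key lemma I would isolate is that $u\leq v$ and $\rest{v}\leq\rest{u}$ together imply $u=v$.
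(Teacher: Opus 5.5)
The paper does not prove this lemma; it cites \cite{Cockett02,Cockett2023,Cockett2009Boolean}, so your derivation has to stand on its own. Items (i)--(v), (ix) and (x) are correct as written, and your argument for (vii) is correct once (vi) is available. The gap is in the reverse inequality of (vi).

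\textbf{The circular step in (vi).} You use $(\rest{f}\vee\rest{g})(f\vee g)=\rest{f}(f\vee g)\vee\rest{g}(f\vee g)$. This distributes a composite over a join sitting in the \emph{left} factor. That is an instance of (vii) with $h=f\vee g$, not of \textbf{[J.3]}, which only distributes a map precomposed onto a join. Your proof of (vii) uses (vi) twice: once for $\rest{\rest{fh}f\vee\rest{gh}g}$ and once for $\rest{fh\vee gh}$. So, in the order you chose, the pair (vi)/(vii) is circular.

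The same dependence shows up in (viii). The compatibility $(f\vee g)\smile h$ is the chain $\rest{h}(f\vee g)=\rest{f}h\vee\rest{g}h=(\rest{f}\vee\rest{g})h=\rest{f\vee g}\,h$. Only the first step is \textbf{[J.3]}; the second and third are (vii) and (vi). So it does not follow from \textbf{[J.3]} alone.

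\textbf{The repair.} Use \textbf{[J.3]} with the join of idempotents as the precomposing map.
\begin{enumerate}
\item Let $e=\rest{f}\vee\rest{g}$, which exists by (v).
\item Since $\rest{f},\rest{g}\leq 1_A$, \textbf{[J.2]} gives $e\leq 1_A$, so $e=\rest{e}$ by (iii).
\item \textbf{[J.1]} together with (ii) gives $\rest{f}e=\rest{f}$. By \textbf{[R.2]}, $e\rest{f}=\rest{f}$ as well.
\item Hence $ef=e\rest{f}f=\rest{f}f=f$, and likewise $eg=g$.
\item Now \textbf{[J.3]} applies legitimately: $e(f\vee g)=ef\vee eg=f\vee g$.
\item \textbf{[R.3]} then gives $\rest{f\vee g}=\rest{e(f\vee g)}=e\,\rest{f\vee g}$, that is, $\rest{f\vee g}\leq e$.
\end{enumerate}
This proves (vi) without using (vii). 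The rest of your plan then goes through: (vii) via your restriction comparison and the lemma that $u\leq v$ with $\rest{u}=\rest{v}$ forces $u=v$, and (viii) via uniqueness of least upper bounds once compatibility is secured as above.
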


\section{Override Restriction Categories}\label{sec:override}

In this section, we introduce the notion of an override operator for a restriction category, which is the main novel concept of this paper. We provide various examples (including an example which shows override operators are not unique) and also work out some basic identities. In particular, we show that every override restriction category has binary joins.  

\begin{definition}\label{def:override-rest-cat}
  An \textbf{override restriction category} is a restriction category $\mathbb{X}$ together with a family of binary operations $ -\override- : \X(A,B) \times \X(A,B) \to \X(A,B)$ (for all pairs of objects $A,B \in \mathbb{X}$) such that:
  \begin{enumerate}[{\bf [$\override$.1]}]
  \item \label{override.1} $(f \override g) \override h = f \override (g \override h)$ for all $f,g,h \in \X(A,B)$
  \item \label{override.2} $f \override \rest{f}g = f$ for all $f,g \in \X(A,B)$
  \item \label{override.3} $\rest{g}f \override f = f$ for all $f \in \X(A,B)$ and $g \in \X(A,C)$
  \item \label{override.4} $f(g \override h) = fg \override fh$ for all $f \in \X(A,B)$ and $g,h \in \X(B,C)$
  \item \label{override.5} $\rest{f \override g}\,h = \rest{f}h \override \rest{g}h$ for all $f,g \in \X(A,C)$ and $h \in \X(A,B)$
  \end{enumerate}
  We call $\override$ an \textbf{override operator} and $f \override g$ is called $f$ override $g$. 
\end{definition}

Intuitively, for maps $f$ and $g$, we think of $f \override g$ as doing $f$ when $f$ is defined and doing $g$ where $f$ is not defined (which is defined when $g$ is and undefined if not). {\bf [$\override$.1]} says that override operation is associative. {\bf [$\override$.2]} says that $f$ overriding $g$ where $f$ is defined is the same as just doing $f$. {\bf [$\override$.3]} says that the restricting $f$ to any restriction idempotent and overriding itself is the same as just doing $f$ again. {\bf [$\override$.4]} says that pre-composition preserves overriding. {\bf [$\override$.5]} says that $h$ restricted to the domain definition of $f$ overriding $h$ restricted to the domain of definition of $g$, is the same as $h$ restricted to the domain of definition of $f$ overriding $g$. As we will see below, from this it follows that the restriction of the override is equal to the override of the restrictions. %Here are now are main examples of override restriction categories. 

\begin{example}\label{ex:par-override} The restriction category $\mathsf{Par}$ (Example~\ref{ex:par-rest-cat}) is an override restriction category with override operator, where for partial functions $f: X \to Y$ and $g: X \to Y$, $f \override g: X \to Y$ is the partial function defined as follows: 
 \[(f \override g)(x) =
  \begin{cases}
    f(x) &\text{ if } f(x)\downarrow\\
    g(x) &\text{ if } f(x)\uparrow \text{ and } g(x)\downarrow\\
    \uparrow &\text{ otherwise}
  \end{cases}\] 
\end{example}

\begin{example}\label{ex:trivial-override}
  Every category seen as a trivial restriction category (Example~\ref{ex:trivial-rest-cat}) is an override restriction category with override operator given by $f \override g = f$.
\end{example}

\begin{example}\label{ex:cring-override}
The restriction category $\mathsf{CRING}^{op}_\bullet$ (Example~\ref{ex:par-rest-cat}) is an override restriction category, so $\mathsf{CRING}_\bullet$ has a co-override operator where for non-unital ring morphisms $f: R\to S$ and $g: R\to S$, $f \override g: R \to S$ is the non-unital ring morphism defined as $(f \override g)(x) =  f(x) + g(x) - f(1) g(x)$. 
\end{example}

\begin{example}\label{ex:lattice-override} An upper-bounded distributive lattice $(L,\wedge,\vee,\top)$, seen a restriction category as in Example~\ref{ex:meet-rest-cat}, is also an override restriction category whose override operation is given by the join, $x \override y = x \vee y$. 
\end{example}

That Examples~\ref{ex:par-override} and \ref{ex:cring-override} are override restriction categories will follow from Sec~\ref{sec:classical}, while Example~\ref{ex:trivial-override} and \ref{ex:lattice-override} are straightforward to check directly. Moreover, as we will see below, in many settings, override operators are unique. In fact, in all of the above examples, the override operator is unique. However in general, override operators need not be unique as exhibited by the following example:

\begin{example}\label{ex:override-not-unique} We define a one object restriction category whose only homset is the set $\{0,1,2,3,4,5\}$, where $0$ is the identity, the composition $*$ is defined via the Cayley table below on the left, and the domains of definition $\rest{x}$ are defined below on the right: 
  \begin{mathpar}
  \begin{tabular}{c | c c c c c c}
    $*$ & 0 & 1 & 2 & 3 & 4 & 5 \\
    \cline{1-7} 
    0 & 0 & 1 & 2 & 3 & 4 & 5 \\
    1 & 1 & 1 & 1 & 1 & 1 & 1 \\
    2 & 2 & 1 & 2 & 1 & 1 & 2 \\
    3 & 3 & 1 & 1 & 3 & 3 & 3 \\
    4 & 4 & 1 & 1 & 4 & 4 & 4 \\
    5 & 5 & 1 & 2 & 4 & 3 & 0
  \end{tabular}

  \overline{x} =
  \begin{cases}
    2 &\text{ if } x = 1 \text{ or } x = 2\\
    0 &\text{ otherwise}
  \end{cases}
\end{mathpar}
This restriction category admits two separate override operators $\override$ and $\blacktriangleright$, defined as follows: 
\begin{mathpar}
  \begin{tabular}{c | c c c c c c}
    $\override$ & 0 & 1 & 2 & 3 & 4 & 5 \\
    \cline{1-7} 
    0 & 0 & 0 & 0 & 0 & 0 & 0 \\
    1 & 4 & 1 & 1 & 3 & 4 & 3 \\
    2 & 0 & 2 & 2 & 5 & 0 & 5 \\
    3 & 3 & 3 & 3 & 3 & 3 & 3 \\
    4 & 4 & 4 & 4 & 4 & 4 & 4 \\
    5 & 5 & 5 & 5 & 5 & 5 & 5
  \end{tabular}

  \begin{tabular}{c | c c c c c c}
    $\blacktriangleright$ & 0 & 1 & 2 & 3 & 4 & 5 \\
    \cline{1-7} 
    0 & 0 & 0 & 0 & 0 & 0 & 0 \\
    1 & 3 & 1 & 1 & 3 & 4 & 4 \\
    2 & 0 & 2 & 2 & 0 & 5 & 5 \\
    3 & 3 & 3 & 3 & 3 & 3 & 3 \\
    4 & 4 & 4 & 4 & 4 & 4 & 4 \\
    5 & 5 & 5 & 5 & 5 & 5 & 5
  \end{tabular}
\end{mathpar}
This example was found using the program Mace4, which generates finite (counter-)examples for equational specifications~\cite{Prover9Mace4}. The corresponding code is given in Appendix~\ref{app:mace4-code}.
\end{example}

We now enumerate some interesting facts involving the override operator. 

\begin{lemma}\label{lem:override-facts}
  Let $\X$ be an override restriction category. Then for all $f,g,h \in \X(A,B)$, 
  %\vspace{-12pt}
  \begin{enumerate}[{\em (i)}]
  \begin{multicols}{2}
  \item \label{lem:override-facts.rest} $\rest{f \override g} = \rest{f} \override \rest{g}$;
  \item \label{lem:override-facts.idempotent} $f \override f = f$; 
  \item \label{lem:override-facts.total} If $f$ is total, then $f \override g = f$;
  \item \label{lem:override-facts.2} $f \leq f \override g$, that is, $\rest{f}(f \override g) = f$;
    \item \label{lem:override-facts.11} $\rest{f \override g} f = f$;
   \item \label{lem:override-facts.12} $\rest{f \override g} g  = g$; 
  \item \label{lem:override-facts.9} $f \override g = f$ if and only if $\rest{f} g = g$;
     \item \label{lem:override-facts.10} $(f \override g) \override f = f \override g$;
  \item \label{lem:override-facts.3} $g \leq f \override g$ if and only if $f \smile g$; 
  \columnbreak 
  \item \label{lem:override-facts.4} If $f \leq h$ and $g \leq h$, then $f \override g \leq h$;
\item \label{lem:override-facts.5} $f \leq g$ if and only if $f \override g = g$; 
 \item \label{lem:override-facts.6} $f \override g = g \override f$ if and only if $f \smile g$;
  \item \label{lem:override-facts.7} $\rest{f} \override \rest{g} = \rest{f \override g} = \rest{g \override f} = \rest{g} \override \rest{f}$;
  \item \label{lem:override-facts.8} $\rest{f} \override 1_A = 1_A = 1_A \override \rest{f}$; 
  \item $f$ is total if and only if $\rest{f} \override 1_A = \rest{f} = 1_A \override \rest{f}$.
  \end{multicols}
  \end{enumerate}
    %\vspace{-15pt}
\end{lemma}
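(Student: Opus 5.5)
We prove the items roughly in the order listed, each using the axioms {\bf [$\override$.1]}--{\bf [$\override$.5]}, the restriction axioms and Lemma~\ref{lem:restriction-basics}. The tools used throughout are {\bf [$\override$.5]} with $h = 1_A$ and {\bf [$\override$.4]} with a restriction idempotent as the precomposed map.

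\emph{The basic items (i)--(vi).} For (i), take $h = 1_A$ in {\bf [$\override$.5]} to get $\rest{f\override g} = \rest{f}\override\rest{g}$. Item (ii) is {\bf [$\override$.3]} with $g=f$, since $\rest{f}f = f$ by {\bf [R.1]}. For (iii), if $\rest{f}=1_A$ then {\bf [$\override$.2]} gives $f \override g = f\override \rest{f}g = f$. For (iv), use {\bf [$\override$.4]} and {\bf [R.1]}: $\rest{f}(f\override g) = f \override \rest{f}g = f$. For (v), expand by {\bf [$\override$.5]}: $\rest{f\override g}f = \rest{f}f\override\rest{g}f = f \override \rest{g}f$. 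Then use $\rest g f = \rest{f}\rest{g}f$ and {\bf [$\override$.2]} in the form $f\override\rest{f}k = f$ to conclude it equals $f$. Similarly, (vi) gives $\rest{f}g \override g = g$ by {\bf [$\override$.3]}.

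\emph{The equivalences and absorption, (vii)--(xi).} For (vii), the forward direction is $\rest{f}g = \rest{f}(f\override g) = f$ by (iv)? Not quite: (iv) gives $\rest{f}(f \override g)=f$, so we cannot read off $\rest{f}g$ directly. Instead, assume $f\override g = f$ and precompose with $\rest{f}$ using {\bf [$\override$.4]}: $\rest{f}f \override \rest{f}g = f\override\rest{f}g = f$, which is uninformative. The better route is $g = \rest{f\override g}g = \rest{f}g$, using (vi). Conversely, if $\rest{f}g=g$ then {\bf [$\override$.2]} gives $f \override g = f$. Item (viii) follows from (vii), since $\rest{f\override g}f = f$ by (v). For (ix), we have $\rest{g}(f\override g) = \rest{g}f\override g$ by {\bf [$\override$.4]}. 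If $f\smile g$, this equals $\rest{f}g\override g = g$ by {\bf [$\override$.3]}. Conversely, if $\rest{g}(f\override g) = g$, apply $\rest{f}$ and use commutation of idempotents together with (iv) to get $\rest{f}g = \rest{g}\rest{f}(f\override g) = \rest{g}f$. For (x), (iv) says $\rest{f}(f\override g)=f$, so $\rest{f}h=f$ and $\rest{g}h = g$ give $\rest{f\override g}h = \rest{f}h\override\rest{g}h = f\override g$ by {\bf [$\override$.5]}. For (xi), if $f\le g$ then $f\override g = \rest{f}g\override g = g$ by {\bf [$\override$.3]}. Conversely, if $f \override g = g$, then $\rest{f}g = \rest{f}(f\override g) = f$ by (iv).

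\emph{Commutativity and the idempotent items, (xii)--(xv).} For (xii), the forward direction uses (iv) twice: $\rest{g}f = \rest{g}(g\override f) = \rest{g}(f\override g) = g$?? This is wrong, so instead compute $\rest{f}g = \rest{f}(g\override f)$... The cleaner plan is to use (ix) and (iv): if $f\override g = g\override f$, then $g \le g\override f = f\override g$, so $f\smile g$ by (ix). Conversely, if $f\smile g$, then by (ix) and (iv) both $f$ and $g$ lie below $f\override g$, and symmetrically below $g\override f$. Applying (x) both ways gives $f\override g\le g\override f$ and $g\override f \le f\override g$, so they are equal by antisymmetry of $\le$. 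Item (xiii) then follows from (i) and (xii), since restriction idempotents are compatible (Lemma~\ref{lem:restriction-basics}). For (xiv), (iii) gives $1_A\override\rest f = 1_A$, and (xiii) gives $\rest{f}\override 1_A = 1_A\override\rest{f}$. For (xv), if $f$ is total everything equals $1_A$. Conversely, $\rest{f}\override 1_A = \rest f$ combined with (xiv) forces $\rest f = 1_A$.

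I expect the main obstacle to be the converse directions of (ix) and (xii), where compatibility must be extracted. Here the antisymmetry argument via (x) is the key step.
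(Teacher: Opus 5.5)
Your proposal is correct and follows the paper's proof essentially item by item: \textbf{[$\override$.5]} with $h=1_A$ for (i), precomposition by $\rest{f}$ or $\rest{g}$ via \textbf{[$\override$.4]} for (iv) and (ix), and the antisymmetry argument through (ix) and (x) for (xii). Your few deviations are sound, and one is an improvement. In (v) you justify $f \override \rest{g}f = f$ by rewriting $\rest{g}f = \rest{f}\rest{g}f$ and applying \textbf{[$\override$.2]}, which is what that step actually needs; the paper cites \textbf{[$\override$.3]}, whose arguments are in the other order. Your direct forward direction of (xi), $f \override g = \rest{f}g \override g = g$, is shorter than the paper's route through (ix), (x) and antisymmetry. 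Just delete the abandoned false starts in (vii) and (xii) before writing it up.
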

\begin{proof} These are mostly straightforward to check. 
  \begin{enumerate}[{\em (i)}]
\item Using {\bf [$\override$.5]}, we have $\rest{f \override g} = \rest{f \override g}1_B \overset{\text{\tiny [$\override$.\ref{override.5}]}}{=}  \rest{f}1_B \override \rest{g}1_B = \rest{f} \override \rest{g}$.

\item Using {\bf [$\override$.3]}, we have $f \override f \overset{\text{\tiny [R.\ref{R1}]}}{=} \rest{f} f \override f \overset{\text{\tiny [$\override$.\ref{override.3}]}}{=} f$.

\item Suppose $f$ is total. Using {\bf [$\override$.2]}, we have $f \override g = f \override 1_A g \overset{\text{\tiny $f$ total}}{=} f \override \rest{f} g \overset{\text{\tiny [$\override$.\ref{override.2}]}}{=} f$. 

  \item We compute $\rest{f}(f \override g) \overset{\text{\tiny [$\override$.\ref{override.5}]}}{=}  \rest{f}f \override \rest{f}g \overset{\text{\tiny [R.\ref{R1}]}}{=} f \override \rest{f}g \overset{\text{\tiny [$\override$.\ref{override.2}]}}{=} f$. Thus $f \leq f \override g$. 

  \item We compute $\rest{f \override g} f \overset{\text{\tiny [$\override$.\ref{override.5}]}}{=} \rest{f}f \override \rest{g}f \overset{\text{\tiny [R.\ref{R1}]}}{=} f \override \rest{g} f \overset{\text{\tiny [$\override$.\ref{override.3}]}}{=} f$. 

  \item We compute $\rest{f \override g} g \overset{\text{\tiny [$\override$.\ref{override.5}]}}{=} \rest{f}g \override \rest{g} g \overset{\text{\tiny [R.\ref{R1}]}}{=} \rest{f}g \override g \overset{\text{\tiny [$\override$.\ref{override.3}]}}{=} g$. 

  \item Suppose $f \override g = f$. Then we have $\rest{f}g \overset{\text{\tiny Asmp.}}{=} \rest{f \override g} g \overset{\text{\tiny\ref{lem:override-facts}.(\ref{lem:override-facts.12}})}{=} g$. Conversely, suppose $\rest{f}g = g$. Then we compute $f \override g \overset{\text{\tiny Asmp.}}{=} f \override \rest{f} g \overset{\text{\tiny [$\override$.\ref{override.2}]}}{=} f$.

   \item Follows immediately from Lem \ref{lem:override-facts}.(\ref{lem:override-facts.11})+(\ref{lem:override-facts.9}). 
  
  \item If $g \leq f \override g$ then we have $\rest{f}g \overset{g \leq f \override g}{=} \rest{f}\rest{g}(f \override g) \overset{\text{\tiny [R.\ref{R2}]}}{=} \rest{g}\rest{f}(f \override g) \overset{\text{\tiny\ref{lem:override-facts}.(\ref{lem:override-facts.2}})}{=}   \rest{g}f$ and so $f \smile g$. Conversely, if $f \smile g$ then we have $\rest{g}(f \override g) \overset{\text{\tiny [$\override$.\ref{override.4}]}}{=}  \rest{g}f \override \rest{g}g \overset{\text{\tiny [R.\ref{R1}]}}{=} \rest{g}f \override g \overset{\text{\tiny $f \smile g$}}{=} \rest{f}g \override g \overset{\text{\tiny [$\override$.\ref{override.3}]}}{=} g$ and so $g \leq f \override g$.
  
  \item If $f \leq h$ and $g \leq h$ then we have $\rest{f \override g}h \overset{\text{\tiny [$\override$.\ref{override.5}]}}{=} \rest{f}h \override \rest{g}h \overset{f,g \leq h}{=} f \override g$, and so $f \override g \leq h$.

  \item Suppose $f \leq g$. On the one hand, by Lem \ref{lem:restriction-basics}.(\ref{lem:restriction-basics.leq.smile}), we have $f \smile g$, which then by Lem \ref{lem:override-facts}.(\ref{lem:override-facts.3}) gives us $g \leq f \override g$. On the other hand, since $f \leq f$, then Lem \ref{lem:override-facts}.(\ref{lem:override-facts.4}) gives us $f \override g \leq g$. Thus by antisymmetry of $\leq$, we get $f \override g = g$. Conversely, if $f \override g = g$, we have $\rest{f}g \overset{\text{\tiny Asmp.}}{=} \rest{f}(f \override g) \overset{\text{\tiny\ref{lem:override-facts}.(\ref{lem:override-facts.2}})}{=} f$, and so $f \leq g$. 
  
 % \item Immediate from parts 2,3, and 4.

\item Suppose $f \override g = g \override f$. By Lem \ref{lem:override-facts}.(\ref{lem:override-facts.3}) we have $g \leq g \override f$, but since $f \override g = g \override f$, this means $g \leq f \override g$, which by Lem \ref{lem:override-facts}.(\ref{lem:override-facts.3}) gives us $f \smile g$. Conversely, suppose $f \smile g$. Then by Lem \ref{lem:override-facts}.(\ref{lem:override-facts.2})+(\ref{lem:override-facts.3}) we have $g \leq g \override f$ and $f \leq g \override f$. Thus by Lem \ref{lem:override-facts}.(\ref{lem:override-facts.4}) this gives us that $f \override g \leq g \override f$. Similarly we can also argue that $g \override f \leq f \override f$. Thus by antisymmetry of $\leq$, we conclude that $f \override g = g \override f$. 

\item By Lem \ref{lem:restriction-basics}.(\ref{lem:restriction-basics.rest.smile}), we have that $\rest{f} \smile \rest{g}$. So the desired identity follows from Lem \ref{lem:override-facts}.(\ref{lem:override-facts.rest})+(\ref{lem:override-facts.6}). 

\item By \ref{lem:restriction-basics}.(\ref{lem:restriction-basics.rest.leq.id})+(\ref{lem:restriction-basics.rest.smile}), we know that $\rest{f} \leq 1_A$ and $\rest{f} \smile 1_A$. So the desired identity follows from Lem \ref{lem:override-facts}.(\ref{lem:override-facts.6})+(\ref{lem:override-facts.7}). 

\item This follows immediately from Lem \ref{lem:override-facts}.(\ref{lem:override-facts.8}). 
  \end{enumerate}
\end{proof}

A fundamental consequence of the above facts is that every override restriction category admits binary joins, where the join is given by the override. 

\begin{theorem}\label{thm:override-joins} If $\X$ is an override restriction category, then $\X$ has binary joins where for all $f,g \in \X(A,B)$, their join is $f \vee g = f \override g$. 
\end{theorem}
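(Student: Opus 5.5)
We verify the three axioms of Definition~\ref{def:join} for compatible $f \smile g$, setting $f \vee g := f \override g$. Each axiom should follow directly from Lemma~\ref{lem:override-facts} and axiom {\bf [$\override$.4]}.

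For {\bf [J.1]}, Lemma~\ref{lem:override-facts}.(\ref{lem:override-facts.2}) gives $f \leq f \override g$ unconditionally. Since $f \smile g$, Lemma~\ref{lem:override-facts}.(\ref{lem:override-facts.3}) gives $g \leq f \override g$. So $\rest{f}(f \override g) = f$ and $\rest{g}(f\override g) = g$, as required.

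For {\bf [J.2]}, suppose $f \leq h$ and $g \leq h$. Then Lemma~\ref{lem:override-facts}.(\ref{lem:override-facts.4}) yields $f \override g \leq h$ directly.

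For {\bf [J.3]}, take $k \in \X(A',A)$. Axiom {\bf [$\override$.4]} gives $k(f \override g) = kf \override kg$. To read the right-hand side as the join $kf \vee kg$, we must know that $kf \smile kg$, so that the join is defined there.

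That compatibility check is the only point needing an actual computation. We want $\rest{kg}\,kf = \rest{kf}\,kg$. Using {\bf [R.4]} twice, we have $\rest{kg}\,k = k\rest{g}$ and $\rest{kf}\,k = k\rest{f}$. Hence
\[
\rest{kg}\,kf = k\rest{g}f = k\rest{f}g = \rest{kf}\,kg,
\]
where the middle equality is $f \smile g$. Thus $kf \smile kg$, and $k(f\vee g) = kf \vee kg$.

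Since the join is unique whenever it exists (it is a least upper bound in a poset), we may state that the binary joins of $\X$ are exactly $f \vee g = f \override g$. By Lemma~\ref{lem:override-facts}.(\ref{lem:override-facts.6}), this expression is symmetric in $f$ and $g$ for compatible pairs, consistent with Lemma~\ref{lem:restriction-basics}.(\ref{lem:restriction-basics.rest.join.commutative}).
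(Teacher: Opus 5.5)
Your proposal is correct and follows essentially the same route as the paper: \textbf{[J.1]} from Lemma~\ref{lem:override-facts}.(\ref{lem:override-facts.2}) and (\ref{lem:override-facts.3}), \textbf{[J.2]} from Lemma~\ref{lem:override-facts}.(\ref{lem:override-facts.4}), and \textbf{[J.3]} from {\bf [$\override$.4]}. You also explicitly verify that $kf \smile kg$ via {\bf [R.4]}, which the paper leaves implicit but which is needed for the right-hand side of \textbf{[J.3]} to be a join.
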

\begin{proof} We need to check that for compatible maps, $\override$ satisfies \textbf{[J.1]}, \textbf{[J.2]}, and \textbf{[J.3]}. Now \textbf{[J.1]} follows from Lem~\ref{lem:override-facts}.(\ref{lem:override-facts.2})+(\ref{lem:override-facts.3}), \textbf{[J.2]} follows from Lem~\ref{lem:override-facts}.(\ref{lem:override-facts.4}), and lastly \textbf{[J.3]} follows from {\bf [$\override$.4]}. So we conclude that an override restriction category has binary joins. 
\end{proof}

Since joins are unique, by \textbf{[J.2]}, it follows that having binary joins is a property of a restriction category rather than extra structure. From this, it follows that even if override operators are not necessarily unique, any two override operators must have the same domain of definition and be equal on compatible maps:

\begin{lemma}\label{lem:override-coincide}
  If $\override$ and $\blacktriangleright$ are both override operators on a restriction category $\mathbb{X}$, then for any maps $f,g \in \mathbb{X}(A,B)$, we have that $\rest{f \override g} = \rest{f \blacktriangleright g}$. Moreover, if $f \smile g$, then $f \override g = f \blacktriangleright g$. 
\end{lemma}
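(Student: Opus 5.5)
The proof has two parts, taken in reverse order, since the first reduces to the second.

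\emph{Compatible maps.} Suppose $f \smile g$. By Theorem~\ref{thm:override-joins}, applied once to $\override$ and once to $\blacktriangleright$, both $f \override g$ and $f \blacktriangleright g$ are joins of $f$ and $g$. That is, each satisfies \textbf{[J.1]} and \textbf{[J.2]}. Joins are unique. Concretely, \textbf{[J.1]} for $f \blacktriangleright g$ gives $f \leq f \blacktriangleright g$ and $g \leq f \blacktriangleright g$. Then \textbf{[J.2]} for $f \override g$ gives $f \override g \leq f \blacktriangleright g$. By symmetry $f \blacktriangleright g \leq f \override g$, and antisymmetry of $\leq$ gives $f \override g = f \blacktriangleright g$. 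The same conclusion also follows from Lemma~\ref{lem:override-facts}.(\ref{lem:override-facts.2}), (\ref{lem:override-facts.3}) and (\ref{lem:override-facts.4}), each applied to both operators.

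\emph{Restrictions of arbitrary maps.} For arbitrary $f,g$, Lemma~\ref{lem:override-facts}.(\ref{lem:override-facts.rest}), which holds for any override operator, gives
\[ \rest{f \override g} = \rest{f} \override \rest{g} \quad\text{and}\quad \rest{f \blacktriangleright g} = \rest{f} \blacktriangleright \rest{g}. \]
By Lemma~\ref{lem:restriction-basics}.(\ref{lem:restriction-basics.rest.smile}), the restriction idempotents are compatible: $\rest{f} \smile \rest{g}$. Applying the compatible case to the pair $\rest{f}, \rest{g} \in \X(A,A)$ yields $\rest{f} \override \rest{g} = \rest{f} \blacktriangleright \rest{g}$. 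Hence $\rest{f \override g} = \rest{f \blacktriangleright g}$.

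There is no real obstacle here. The only point needing care is to note that all the earlier lemmas used are proven for an arbitrary override operator, so they apply equally to $\blacktriangleright$.
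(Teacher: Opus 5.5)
Your proposal is correct and follows the paper's proof essentially step for step. You first settle the compatible case via Theorem~\ref{thm:override-joins} and uniqueness of joins. You then obtain the general case by applying it to the compatible pair $\rest{f},\rest{g}$ through Lemma~\ref{lem:override-facts}.(\ref{lem:override-facts.rest}). Your citation of Lemma~\ref{lem:restriction-basics}.(\ref{lem:restriction-basics.rest.smile}) for $\rest{f}\smile\rest{g}$ is in fact the right one; the paper's proof points to the item saying that $\leq$ implies $\smile$.
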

\begin{proof} Let us begin by explaining the second part. So suppose $f \smile g$. Then by Thm \ref{thm:override-joins}, $f \override g$ and $f \blacktriangleright g$ are both the join of $f$ and $g$. Since joins are unique it follows that $f \override g = f \blacktriangleright g$. Now drop the assumption $f$ and $g$ are compatible. By Lem \ref{lem:override-facts}.(\ref{lem:override-facts.rest}), we have that $\rest{f \override g} = \rest{f} \override \rest{g}$ and $\rest{f \blacktriangleright g} = \rest{f} \blacktriangleright \rest{g}$. However, recall from Lem \ref{lem:restriction-basics}.(\ref{lem:restriction-basics.leq.smile}) that we always have $\rest{f} \smile \rest{g}$. Therefore, by what we have already shown, we get that $\rest{f \override g} = \rest{f} \override \rest{g} = \rest{f} \blacktriangleright \rest{g} = \rest{f \blacktriangleright g}$ as desired. 
\end{proof}

We conclude this section with some results relating the behaviour of the override operator to the structure of the ambient restriction category, which will involve some degeneracy on the part of each. As such, we refer to these as \textit{collapse results}.

For our first collapse result, we find that the override operator is commutative if and only if every parallel map is compatible. A restriction category where all parallel maps are compatible is called a \textbf{restriction preorder} \cite[Def 3.3.1]{Giles2014}. Meet semi-lattices as in Example~\ref{ex:meet-rest-cat} are examples of restriction preorders. Moreover, it then follows that a restriction preorder has a necessarily unique override operator if and only if it has binary joins. 

\begin{lemma} Let $\X$ be restriction category. 
  \begin{enumerate}[{\em (i)}]
  \item \label{commutative.1} If $\X$ admits an override operator $\override$ which is commutative on each $\X(A,B)$, that is, $f \override g = g \override f$ for all $f,g \in \X(A,B)$, then $\X$ is a restriction preoder.%, that is, $f \smile g$ for all $f,g \in \X(A,B)$. 
   \item \label{commutative.2} If $\X$ is a restriction preorder with binary joins, then it has a unique override operator given by $f \override g = f \vee g$ for all $f,g \in \X(A,B)$. 
  \item \label{commutative.3} If $\X$ is also a restriction preorder, then $\X$ has an override operator if and only if $\X$ has binary joins.
  \end{enumerate}
\end{lemma}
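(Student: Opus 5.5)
Part (\ref{commutative.1}) follows directly from Lem~\ref{lem:override-facts}.(\ref{lem:override-facts.6}). If $f \override g = g \override f$ for all parallel $f,g$, then that lemma gives $f \smile g$ for all $f,g \in \X(A,B)$. This is exactly the statement that $\X$ is a restriction preorder.

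For part (\ref{commutative.2}), suppose $\X$ is a restriction preorder with binary joins. Since every parallel pair is compatible, $f \vee g$ is defined for all $f,g \in \X(A,B)$, so $f \override g := f \vee g$ is a total binary operation on each homset. Uniqueness is immediate from Lem~\ref{lem:override-coincide}: any override operator $\blacktriangleright$ agrees with $\override$ on compatible pairs, and here every pair is compatible. So any override operator, by Thm~\ref{thm:override-joins}, equals the join.

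It remains to check axioms {\bf [$\override$.1]}--{\bf [$\override$.5]} for $\vee$.
\begin{itemize}
\item {\bf [$\override$.1]} is associativity of joins, Lem~\ref{lem:restriction-basics}.(\ref{lem:restriction-basics.rest.join.assoc}). All compatibility hypotheses hold automatically.
\item {\bf [$\override$.4]} is {\bf [J.3]}.
\item {\bf [$\override$.3]} is Lem~\ref{lem:restriction-basics}.(\ref{lem:restriction-basics.rest.join.idempotent.2}).
\item For {\bf [$\override$.2]}, note that $\rest{f}g \smile f$ automatically. We must show $f \vee \rest{f}g = f$. Compatibility $f \smile g$ gives $\rest{f}g = \rest{g}f \leq f$, and $f \leq f$, so by {\bf [J.2]} we get $f \vee \rest{f}g \leq f$. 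Together with $f \leq f \vee \rest{f}g$ from {\bf [J.1]}, antisymmetry gives equality.
\item For {\bf [$\override$.5]}, use Lem~\ref{lem:restriction-basics}.(\ref{lem:restriction-basics.rest.join.rest}) to write $\rest{f \vee g} = \rest{f} \vee \rest{g}$. Then Lem~\ref{lem:restriction-basics}.(\ref{lem:restriction-basics.rest.join.composition}) (post-composition preserves joins) gives $(\rest{f} \vee \rest{g})h = \rest{f}h \vee \rest{g}h$.
\end{itemize}
The main point needing care is {\bf [$\override$.2]}, since it is the only axiom that genuinely uses compatibility beyond mere definedness of the joins.

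For part (\ref{commutative.3}), the forward direction is Thm~\ref{thm:override-joins}, which holds in any override restriction category. The converse is part (\ref{commutative.2}).
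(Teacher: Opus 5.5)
Your proof is correct and follows the paper's argument. Part (i) uses the criterion $f \override g = g \override f \iff f \smile g$ from Lem~\ref{lem:override-facts}, part (iii) combines Thm~\ref{thm:override-joins} with (ii), and part (ii) checks the override axioms for $\vee$ from the basic join identities. Your version goes a little further than the paper: you carry out the axiom check it leaves as an exercise, and you state uniqueness explicitly (every pair is compatible, so any override operator equals the join by Thm~\ref{thm:override-joins}).
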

\begin{proof} For (\ref{commutative.1}), this follows immediately from Lem \ref{lem:override-facts}.(\ref{lem:override-facts.6}). For (\ref{commutative.2}), suppose $\X$ is a restriction preorder with binary joins. Since every map is compatible, setting $f \override g = f \vee g$ is well-defined. It is easy to check that this is indeed an override operator using the basic identities of the join, so we leave this an exercise for the reader. Lastly (\ref{commutative.3}) follows from Thm \ref{thm:override-joins} and (\ref{commutative.2}). 
\end{proof}

Next, we consider the situation in which the override operator is simply a projection. We find that the first projection is an override operator precisely when the restriction structure of the ambient restriction category is trivial; while the second projection is an override operator precisely when the ambient category is a preorder, where recall that a category is preorder if that there is at most one map between two objects. Note that a restriction category which is also a preorder must be a trivial restriction category. 

\begin{lemma}Let $\X$ be restriction category. 
  \begin{enumerate}[{\em (i)}]
  \item \label{first.proj} The first projection $f \override g = f$ for all $f,g \in \X(A,B)$ is an override operator for $\X$ if and only if $\X$ is a trivial restriction category, that is, every map is total. 
  \item \label{second.proj} The second projection $f \override g = g$ for all $f,g \in \X(A,B)$ is an override operator for $\X$ if and only if $\X$ is a preorder.
  \end{enumerate}
\end{lemma}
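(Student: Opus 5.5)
For (\ref{first.proj}), I will prove each direction separately. Suppose first that $f \override g = f$ is an override operator. By {\bf [$\override$.5]} with $h = 1_A$ (equivalently Lem~\ref{lem:override-facts}.(\ref{lem:override-facts.8})), we have $\rest{f} \override 1_A = 1_A$. Since the operator is the first projection, the left side is $\rest{f}$. Hence $\rest{f} = 1_A$ for every $f$, so every map is total.

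Conversely, suppose $\X$ is a trivial restriction category. Then each axiom becomes an identity about the first projection, using $\rest{f} = 1$:
\begin{itemize}
\item {\bf [$\override$.1]} reads $f = f$.
\item {\bf [$\override$.2]} reads $f = f$.
\item {\bf [$\override$.3]} reads $\rest{g}f = f$, which holds since $\rest{g} = 1$.
\item {\bf [$\override$.4]} reads $fg = fg$.
\item {\bf [$\override$.5]} reads $\rest{f}h = \rest{f}h$.
\end{itemize}
Alternatively, one can invoke Lem~\ref{lem:override-facts}.(\ref{lem:override-facts.total}) for the forward direction: it shows any override operator acts as the first projection when maps are total, which at least makes the converse plausible. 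The direct check above is what is needed, though.

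For (\ref{second.proj}), suppose first that $f \override g = g$ is an override operator. Axiom {\bf [$\override$.2]} gives $\rest{f} g = f$ for all parallel $f,g$. Taking $g = f$ is harmless, so the real content comes from arbitrary $g$. By Lem~\ref{lem:override-facts}.(\ref{lem:override-facts.5}), $f \override g = g$ gives $f \leq g$ for all parallel $f,g$. By symmetry also $g \leq f$, and antisymmetry of $\leq$ yields $f = g$. So each homset has at most one element, and $\X$ is a preorder.

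Conversely, if $\X$ is a preorder, all parallel maps are equal. Each axiom of Definition~\ref{def:override-rest-cat} is an equation between two parallel maps, so it holds automatically. The only care needed is to confirm that both sides of each axiom are parallel, which is immediate from typing.

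The main obstacle, if any, is the forward direction of (\ref{first.proj}). The point is to pick the right instance that forces $\rest{f} = 1_A$, and Lem~\ref{lem:override-facts}.(\ref{lem:override-facts.8}) supplies exactly that instance.
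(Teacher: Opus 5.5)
Your proof is correct. Part (i) and both converse directions follow the paper. In the forward direction of (i), the paper also obtains totality by comparing the first projection with $\rest{f}\override 1_A = 1_A$ from Lem~\ref{lem:override-facts}.(\ref{lem:override-facts.8}). The paper also treats both converses as routine checks.

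Two small corrections in (i):
\begin{itemize}
\item The instance of \textbf{[$\override$.5]} with $h=1_A$ does not give $\rest{f}\override 1_A = 1_A$. That instance is just Lem~\ref{lem:override-facts}.(\ref{lem:override-facts.rest}), and under the first projection it reads $\rest{f}=\rest{f}$, so it is vacuous. The work is done entirely by Lem~\ref{lem:override-facts}.(\ref{lem:override-facts.8}), so ``equivalently'' should go.
\item Drop your aside about Lem~\ref{lem:override-facts}.(\ref{lem:override-facts.total}). That fact presupposes an override operator, so on a trivial restriction category it gives uniqueness, not existence. It says nothing about either direction of (i).
\end{itemize}

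The genuine difference is the forward direction of (ii). The paper repeats the trick from (i). The second projection forces $1_A\override\rest{f}=\rest{f}$, while Lem~\ref{lem:override-facts}.(\ref{lem:override-facts.8}) gives $1_A\override\rest{f}=1_A$, so every map is total. Lem~\ref{lem:override-facts}.(\ref{lem:override-facts.total}) then yields $g = f\override g = f$.

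You instead read \textbf{[$\override$.2]} under the second projection as $\rest{f}g=f$, which is literally $f\leq g$ (so citing Lem~\ref{lem:override-facts}.(\ref{lem:override-facts.5}) is redundant). This holds for every parallel pair, and symmetry plus antisymmetry of $\leq$ finishes.

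Your route is shorter and uses one axiom directly instead of two derived facts. The paper's route keeps (i) and (ii) parallel and makes explicit the intermediate fact that every map is total.

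You could make (i) equally direct. Under the first projection, \textbf{[$\override$.3]} reads $\rest{g}f=f$, and taking $f=1_A$ gives $\rest{g}=1_A$ immediately.
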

\begin{proof} For (\ref{first.proj}), the $\Leftarrow$ direction is trivially checked. For the $\Rightarrow$ direction, suppose that $f \override g = f$ is an override operator. In particular this means we have $\rest{f} \override 1_A = \rest{f}$. However by Lem \ref{lem:override-facts}.(\ref{lem:override-facts.8}), we always have $\rest{f} \override 1_A = 1_A$. Therefore, $\rest{f} = 1_A$, so every map in $\X$ is total. For (\ref{second.proj}), the $\Leftarrow$ direction is trivially checked since there is at most one map between two objects. For the $\Rightarrow$ direction, suppose $f \override g = g$ is an override operator. In particular this means that $1_A \override \rest{f} = \rest{f}$. However by Lem \ref{lem:override-facts}.(\ref{lem:override-facts.8}), we always have $1_A \override \rest{f} = 1_A$. Therefore, $\rest{f} = 1_A$, so every map in $\X$ is total. Now let $f,g \in \X(A,B)$. Since $f$ is total, by Lem \ref{lem:override-facts}.(\ref{lem:override-facts.total}), this would mean that $f \override g = f$. However since by assumption $f \override g = g$, it follows that $f =g$. So $\X$ is a preorder. 
\end{proof}

\section{The Update Operator}\label{sec:update}

In semigroup theory, override operators are closely related to another kind of special operators called \textit{update} operators. In this section, we discuss the induced \textit{update operator} associated to an override operator, which is derived by pre-composing the override operator with the restriction idempotent of its second input. We will justify this definition by showing that the override and update operators satisfy Jackson and Stokes' axioms from~\cite[Sec 5]{Jackson2021}. 

\begin{definition}\label{def:update} In an override restriction $\X$, the \textbf{update operator} $\update$ is family of binary operations $ -\update- : \X(A,B) \times \X(A,B) \to \X(A,B)$ (for all pairs of objects $A,B \in \mathbb{X}$) defined as $f \update g = \rest{f}(g \override f)$.
\end{definition}

Now recall that $g \override f$ says do $g$ when $g$ is defined and do $f$ when $g$ is undefined. Then restricting $g \override f$ do the domain of definition of $f$ tells us that $f \update g$ should be interpreted as doing $g$ when $f$ is defined and doing $f$ when $g$ is undefined. Of course different override operators will result in different update operators. %Here are the induced update operators for our main examples of override restriction categories.

\begin{example}\label{ex:par-update} For the override restriction category $\mathsf{Par}$ (Example~\ref{ex:par-override}), for partial functions $f: X \to Y$ and $g: X \to Y$, $f \update g: X \to Y$ is the partial function defined as follows: 
 \[(f \update g)(x) =
  \begin{cases}
    f(x) &\text{ if } f(x)\downarrow \text{ and } g(x) \uparrow\\
    g(x) &\text{ if } f(x)\downarrow \text{ and } g(x)\downarrow\\
    \uparrow &\text{ otherwise}
  \end{cases}\] 
\end{example}

\begin{example}\label{ex:trivial-update} For a trivial restriction category (Example~\ref{ex:trivial-override}), $f \update g = g$. 
\end{example}

\begin{example}\label{ex:cring-update} For the override restriction category $\mathsf{CRING}^{op}_\bullet$ (Example~\ref{ex:cring-override}), in $\mathsf{CRING}_\bullet$, for non-unital ring morphisms $f: R\to S$ and $g: R\to S$, $f \update g: R \to S$ is the non-unital ring morphism defined as $(f \update g)(x) =  f(1)g(x) + f(x) - g(1) f(x)$. 
\end{example}

\begin{example}\label{ex:lattice-update} For an upper-bounded distributive lattice $(L,\wedge,\vee,\top,\bot)$, seen as an override restriction category as in Example~\ref{ex:lattice-override}, $x \update y = x \wedge (y \vee x)$. 
\end{example}

In~\cite[Thm 5.1 \& Rem 5.2]{Jackson2021}, Jackson and Stokes give axioms characterising the algebras over the signature $\{\override_{/2},\update_{/2}\}$ that are representable as systems of partial functions with the override and update operations given in Example~\ref{ex:par-override} and \ref{ex:par-update}. Treating these axioms as the equations of an algebraic theory, we obtain the following definition of an \textit{override-update algebra}. 

\begin{definition}An \textbf{override-update algebra} $(X,\override,\update)$ consists of a set $X$ together with binary operations $    -\override- : X \times X \to X$ and $-\update- : X \times X \to X$ such that the following equations are satisfied for all $x,y,z \in X$:
    \vspace{-10pt}
  \begin{enumerate}[{\bf [OU.1]}]
    \begin{multicols}{2}
  \item $(x \override y) \override z = x \override (y \override z)$
  \item $x \override x = x$
  \item $x = x \update (x \override y)$
  \columnbreak
  \item \label{OU4} $x \override y = (y \update x) \override x$
  \item $(x \update y) \update z = x \update (z \override y)$
  \item $x \update (y \update (x \update z)) = x \update ((y \update x) \update z)$
    \end{multicols}
  \end{enumerate}
      \vspace{-12pt}
\end{definition}

To justify the definition of the update operator in an override restriction category (and as a kind of sanity check), we show that the homsets of an override restriction category are override-update algebras. 

\begin{theorem}\label{thm:ou-algebra}
  Let $\X$ be an override restriction category. Then for all objects $A,B$ of $\X$, $(\X(A,B),\override,\update)$ is an override-update algebra. 
\end{theorem}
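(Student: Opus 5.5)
We verify the six axioms one at a time, working throughout with the defining formula $f \update g = \rest{f}(g \override f)$ from Def~\ref{def:update}. Our main tools are {\bf [$\override$.1]}--{\bf [$\override$.5]} and Lem~\ref{lem:override-facts}, together with two auxiliary identities. The first is $\rest{\rest{x}(y \override x)} = \rest{x}$. It should follow from {\bf [R.3]} and Lem~\ref{lem:override-facts}: Lem~\ref{lem:override-facts}.(\ref{lem:override-facts.12}) gives $\rest{y \override x}\,x = x$, hence $\rest{x}\,\rest{y \override x} = \rest{x}$. The second is the distribution law $\rest{e}(u \override v) = \rest{e}u \override \rest{e}v$, which is {\bf [$\override$.4]} with $f = \rest{e}$.

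The first four axioms should be routine.
\begin{itemize}
\item {\bf [OU.1]} is {\bf [$\override$.1]}, and {\bf [OU.2]} is Lem~\ref{lem:override-facts}.(\ref{lem:override-facts.idempotent}).
\item For {\bf [OU.3]}, we have $x \update (x \override y) = \rest{x}((x \override y) \override x)$. This equals $\rest{x}(x \override y)$ by Lem~\ref{lem:override-facts}.(\ref{lem:override-facts.10}), which equals $x$ by Lem~\ref{lem:override-facts}.(\ref{lem:override-facts.2}).
\item For {\bf [OU.5]}, apply the first auxiliary identity to get $(x \update y)\update z = \rest{x}\bigl(z \override \rest{x}(y \override x)\bigr)$. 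Distribute $\rest{x}$ with {\bf [$\override$.4]}, and absorb the resulting $\rest{x}\rest{x}$ using Lem~\ref{lem:restriction-basics}.(\ref{lem:restriction-basics.idempotent}). This gives $\rest{x}z \override \rest{x}(y \override x) = \rest{x}(z \override (y \override x))$. Finally, {\bf [$\override$.1]} turns this into $\rest{x}((z \override y)\override x) = x \update (z \override y)$.
\end{itemize}

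For {\bf [OU.4]}, we must show $\rest{y}(x \override y) \override x = x \override y$. By {\bf [$\override$.4]} and {\bf [R.1]}, $\rest{y}(x \override y) = \rest{y}x \override y$. So by associativity the left side is $\rest{y}x \override (y \override x)$. On the other side, Lem~\ref{lem:override-facts}.(\ref{lem:override-facts.10}) and associativity give $x \override y = x \override (y \override x)$. By {\bf [$\override$.3]} we can write $x = \rest{y}x \override x$, so the right side becomes $\rest{y}x \override (x \override (y \override x))$, which equals $\rest{y}x \override (x \override y)$. It therefore suffices to show
\[ \rest{y}x \override (x \override y) = \rest{y}x \override (y \override x). \]
Intuitively, $x \override y$ and $y \override x$ differ only on the region $\rest{x}\,\rest{y}$, and $\rest{y}x$ already covers that region. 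To make this precise, we compare the restrictions of both sides to $\rest{y}$ and to the complementary part via {\bf [$\override$.5]}, using Lem~\ref{lem:override-facts}.(\ref{lem:override-facts.9}) to cancel terms. The fallback is to use Lem~\ref{lem:override-facts}.(\ref{lem:override-facts.4}) and (\ref{lem:override-facts.5}) to prove $\leq$ in both directions and then conclude by antisymmetry. We expect this step to be delicate, because there are no complements available, so every use of the ``complementary part'' has to be simulated through {\bf [$\override$.2]} and {\bf [$\override$.3]}.

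The main obstacle is {\bf [OU.6]}. Our plan is to expand both sides completely. Each nested update produces a prefix $\rest{x}$, or a restriction of an update, and the first auxiliary identity collapses the latter to a plain restriction idempotent. We then push every restriction idempotent inside the overrides using {\bf [$\override$.4]}, and use the commutativity of restriction idempotents ({\bf [R.2]}) together with {\bf [R.3]} to normalise products such as $\rest{x}\,\rest{y}$. The left side should reduce to
\[ \rest{x}\bigl(\rest{y}(\rest{x}z \override \rest{x}\rest{z}... ) \bigr) \]
and, more usefully, both sides should reach a common normal form of the shape $\rest{x}\bigl(\rest{y}z \override \rest{y}x \override \rest{x}\ldots\bigr)$, that is, a left-associated override of terms of the form $\rest{e}w$. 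At that point we remove redundant summands using {\bf [$\override$.2]}, {\bf [$\override$.3]}, and the {\bf [OU.4]}-type identity just proved. If the direct normalisation stalls, we would instead reduce {\bf [OU.6]}, using the already established axioms {\bf [OU.3]}--{\bf [OU.5]}, to an instance of the displayed identity above, and prove it in the same way.
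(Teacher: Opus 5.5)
\textbf{The gap is in {\bf [OU.4]}.} Your reduction for {\bf [OU.4]} makes no progress. By {\bf [$\override$.1]} and {\bf [$\override$.3]}, $\rest{y}x \override (x \override y) = (\rest{y}x \override x) \override y = x \override y$, while $\rest{y}x \override (y \override x) = (\rest{y}x \override y) \override x = \rest{y}(x \override y) \override x$. So the identity you say ``suffices'' is just the original equation rewritten, and the tactics you list afterwards are never carried out.

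The fallback also fails as stated. Lem~\ref{lem:override-facts}.(\ref{lem:override-facts.4}) does give $\rest{y}(x \override y) \override x \leq x \override y$. But getting the reverse inequality from the same lemma would require $y \leq \rest{y}(x \override y) \override x$. This is false in $\mathsf{Par}$ whenever $x$ and $y$ disagree at a point where both are defined: there the right-hand side is $x \override y$, and $y \leq x \override y$ iff $x \smile y$.

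The missing idea is to rewrite $x$ as $\rest{x}(x \override y)$ using Lem~\ref{lem:override-facts}.(\ref{lem:override-facts.2}). Then both arguments of the outer override are restrictions of the same map, and {\bf [$\override$.5]} applies from right to left:
\[ \rest{y}(x \override y) \override x = \rest{y}(x \override y) \override \rest{x}(x \override y) = \rest{y \override x}\,(x \override y) = \rest{x \override y}\,(x \override y) = x \override y. \]
The last two steps use Lem~\ref{lem:override-facts}.(\ref{lem:override-facts.7}) and {\bf [R.1]}; this is the paper's argument. Your one-sided inequality can also be rescued. Lem~\ref{lem:override-facts}.(\ref{lem:override-facts.rest}), your auxiliary identity and Lem~\ref{lem:override-facts}.(\ref{lem:override-facts.7}) give $\rest{\rest{y}(x \override y) \override x} = \rest{y} \override \rest{x} = \rest{x \override y}$. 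Together with the inequality this forces equality, since $a \leq b$ and $\rest{a} = \rest{b}$ give $a = \rest{b}b = b$.

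\textbf{{\bf [OU.6]} is not written out, though your plan works.} Your strategy is exactly the paper's: expand, collapse $\rest{y \update x}$ to $\rest{y}$ via the auxiliary identity, distribute with {\bf [$\override$.4]}, and tidy idempotents with {\bf [R.2]}. But you have not executed it, and your displayed ``normal form'' is not a well-formed term. No removal of redundant summands and no appeal to {\bf [OU.4]} is needed. Both sides reduce directly to
\[ \bigl((\rest{x}\rest{y}z \override \rest{x}\rest{y}x) \override \rest{x}\rest{y}y\bigr) \override \rest{x}x. \]
The left-hand side gets there using $\rest{x}\rest{y}\rest{x} = \rest{x}\rest{y}$. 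The right-hand side gets there using $(y \update x) \update z = \rest{y}z \override (\rest{y}x \override \rest{y}y)$ together with {\bf [$\override$.1]}.

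Your auxiliary identity and your arguments for {\bf [OU.1]}, {\bf [OU.2]}, {\bf [OU.3]} and {\bf [OU.5]} are correct and agree with the paper. So the proposal as written establishes four of the six axioms: {\bf [OU.6]} only needs to be written out, whereas {\bf [OU.4]} needs the idea above.
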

\begin{proof} \noindent {\bf [OU.1]} is precisely {\bf [$\override$.1]} while 

  \noindent {\bf [OU.2]} is precisely Lem~\ref{lem:override-facts}.(\ref{lem:override-facts.2}).

 \noindent {\bf [OU.3]} We compute $f \update (f \override g) \overset{\text{\tiny Def.}}{=} \rest{f}((f \override g) \override f) \overset{\text{\tiny\ref{lem:override-facts}.(\ref{lem:override-facts.10}})}{=} \rest{f}(f \override g) \overset{\text{\tiny\ref{lem:override-facts}.(\ref{lem:override-facts.2}})}{=}  f$. 

 \noindent {\bf [OU.4]} We compute $(g \update f) \override f \overset{\text{\tiny Def.}}{=} \overline{g}(f \override g) \override f \overset{\text{\tiny\ref{lem:override-facts}.(\ref{lem:override-facts.2}})}{=} \overline{g}(f \override g) \override \overline{f}(f \override g) \overset{\text{\tiny [$\override$.\ref{override.5}]}}{=}  \rest{g \override f}(f \override g) \\
    \overset{\text{\tiny\ref{lem:override-facts}.(\ref{lem:override-facts.7}})}{=}  \rest{f \override g}(f \override g) \overset{\text{\tiny [R.\ref{R1}]}}{=} f \override g$. 

      \noindent {\bf [OU.5]} We compute:
    \begin{gather*}
   (f \update g) \update h
        \overset{\text{\tiny Def.}}{=}  \rest{f}(g \override f) \update h
        \overset{\text{\tiny Def.}}{=}  \rest{\rest{f}(g \override f)}(h \override \rest{f}(g \override f))
        \overset{\text{\tiny [$\override$.\ref{override.5}]}}{=}  \rest{\rest{f}g \override \rest{f}f}(h \override \rest{f}(g \override f))
      \\ \overset{\text{\tiny [R.\ref{R1}]}}{=} \rest{\rest{f}g \override f}(h \override \rest{f}(g \override f))
      \overset{\text{\tiny\ref{lem:override-facts}.(\ref{lem:override-facts.7}})}{=}  \rest{f \override \rest{f}g}(h \override \rest{f}(g \override f))
       \overset{\text{\tiny [$\override$.\ref{override.2}]}}{=} \rest{f}(h \override \rest{f}(g \override f))
      \\\overset{\text{\tiny [$\override$.\ref{override.4}]}}{=} \rest{f}h \override \rest{f}\,\rest{f}(g \override f)
      \overset{\text{\tiny\ref{lem:restriction-basics}.(\ref{lem:restriction-basics.idempotent}})}{=} \rest{f}h \override \rest{f}(g \override f) \overset{\text{\tiny [$\override$.\ref{override.4}]}}{=} \rest{f}(h \override(g \override f))
       \overset{\text{\tiny [$\override$.\ref{override.1}]}}{=} \rest{f}((h \override g) \override f)
      \overset{\text{\tiny Def.}}{=} f \update (h \override g) 
    \end{gather*}
  
  \noindent {\bf [OU.6]} We compute: 
    \begin{gather*}
       f \update ((g \update f) \update h)
        \overset{\text{\tiny Def.}}{=}  f \update (\rest{g}(f \override g) \update h)
        \overset{\text{\tiny Def.}}{=}  f \update \rest{\rest{g}(f \override g)}(h \override \rest{g}(f \override g)) \overset{\text{\tiny Def.}}{=}  \rest{f}(\rest{\rest{g}(f \override g)}(h \override \rest{g}(f \override g)) \override f) \\
      \overset{\text{\tiny [$\override$.\ref{override.4}]}}{=} \rest{f}(\rest{\rest{g}f \override \rest{g}g}(h \override \rest{g}(f \override g)) \override f) \overset{\text{\tiny\ref{lem:override-facts}.(\ref{lem:override-facts.7}})}{=} \rest{f}(\rest{\rest{g}g \override \rest{g}f}(h \override \rest{g}(f \override g)) \override f)
      \overset{\text{\tiny [$\override$.\ref{override.4}]}}{=}  \rest{f}(\rest{\rest{g}(g \override f)}(h \override \rest{g}(f \override g)) \override f)
      \\
      \overset{\text{\tiny\ref{lem:override-facts}.(\ref{lem:override-facts.2}})}{=} \rest{f}(\rest{g}(h \override \rest{g}(f \override g)) \override f)
      \overset{\text{\tiny [$\override$.\ref{override.4}]}}{=} (\rest{f}\rest{g}h \override (\rest{f}\rest{g}\,\rest{g}f \override \rest{f}\rest{g}g)) \override \rest{f}f \overset{\text{\tiny\ref{lem:restriction-basics}.(\ref{lem:restriction-basics.idempotent}})}{=} (\rest{f}\,\rest{f}\rest{g}h \override (\rest{f}\rest{g}f \override \rest{f}\rest{g}g)) \override \rest{f}f\\
      \overset{\text{\tiny [R.\ref{R2}]}}{=} (\rest{f}\rest{g}\rest{f}h \override (\rest{f}\rest{g}f \override \rest{f}\rest{g}g)) \override \rest{f}f \overset{\text{\tiny [$\override$.\ref{override.1}]}}{=} ((\rest{f}\rest{g}\rest{f}h \override \rest{f}\rest{g}f) \override \rest{f}\rest{g}g) \override \rest{f}f  \overset{\text{\tiny [$\override$.\ref{override.4}]}}{=} \rest{f}(\rest{g}(\rest{f}(h \override f) \override g) \override f)\\
       \overset{\text{\tiny Def.}}{=} f \update \rest{g}(\rest{f}(h \override g) \override g)
        \overset{\text{\tiny Def.}}{=} f \update (h \update (\rest{f}(h \override f)))
        \overset{\text{\tiny Def.}}{=} f \update (g \update (f \update h))
    \end{gather*}
So we conclude that $(\X(A,B),\override,\update)$ is an override-update algebra. \end{proof}

It is natural to wonder about the converse, that is, whether or not having an override-update algebra structure on every hom-set $\X(A,B)$ is enough to make $\X$ into an override restriction category. We show that it is not, by means of a counterexample.

\begin{example} Let $\lbrace \top, \bot \rbrace$ be the two element lattice with top $\top$ and bottom element $\bot$. Now taking $x \override y = x \wedge y = x \update y$ yields an override-update algebra. However, this override operator does not define an override restriction category when we view $\lbrace \top, \bot \rbrace$ as a restriction category as in Example~\ref{ex:meet-rest-cat}: we have $\top \override \rest{\bot}\top = \bot \neq \top$, and so {\bf [$\override$.2]} fails.
\end{example}

We remark that this is to be expected: the axioms of an override restriction category concern the interaction of override with restriction and multiple hom-sets, while the axioms of an override-update algebra do not. So it is not surprising that the override restriction categories are stronger.

Here is now a list of interesting identities for the update operator. 

\begin{lemma}\label{lem:update-properties}
 Let $\X$ be an override restriction category. Then for all $f,g \in \X(A,B)$, 
     \vspace{-12pt}
  \begin{enumerate}[{\em (i)}]
      \begin{multicols}{2}
  \item \label{lem:update-properties.1} $\rest{f \update g} = \rest{f}$
  \item \label{lem:update-properties.2} $\rest{f} \update \rest{g} = \rest{f}$
  \item \label{lem:update-properties.3} $(f \update g) \update g = f \update g$
  \item \label{lem:update-properties.4}  $(f \update g) \update f = f$
  \item \label{lem:update-properties.idempotent} $f \update f = f$
    \columnbreak
  \item \label{lem:update-properties.6} $f \update (g \update f) = f$
  \item \label{lem:update-properties.7} $f \update (f \update g) = f \update g$
  \item \label{lem:update-properties.commutative} $f \update g = g \update f$ if and only if $f=g$
  \item \label{lem:update-properties.smile} $f \update g = f$ if and only if $f \smile g$
  \item \label{lem:update-properties.shelf} $(f \update g) \update h= (f \update h) \update (g \update h)$
  \end{multicols}
  \end{enumerate}
      \vspace{-15pt}
\end{lemma}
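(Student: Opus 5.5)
The plan is to derive most items from the override--update algebra axioms (Thm~\ref{thm:ou-algebra}) together with Lem~\ref{lem:override-facts}, unfolding $f \update g = \rest{f}(g \override f)$ only where restrictions appear.

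\textbf{Restriction items (\ref{lem:update-properties.1}) and (\ref{lem:update-properties.2}).} These come first, since several later items use (\ref{lem:update-properties.1}).
\begin{itemize}
\item For (\ref{lem:update-properties.1}), by \textbf{[R.3]} we have $\rest{f \update g} = \rest{f}\,\rest{g \override f}$. Also $\rest{g \override f}\,\rest{f} = \rest{\rest{g \override f} f} = \rest{f}$, using \textbf{[R.3]} and Lem~\ref{lem:override-facts}.(\ref{lem:override-facts.12}). Combining these with \textbf{[R.2]} gives $\rest{f \update g} = \rest{f}$.
\item For (\ref{lem:update-properties.2}), unfold to get $\rest{f}(\rest{g} \override \rest{f})$. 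This equals $\rest{f}\,\rest{g \override f}$ by Lem~\ref{lem:override-facts}.(\ref{lem:override-facts.rest}), which equals $\rest{f}$ by the computation just done.
\end{itemize}

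\textbf{Purely algebraic items.} These follow from the axioms without further unfolding.
\begin{itemize}
\item (\ref{lem:update-properties.idempotent}): $f \update f = \rest{f}(f \override f) = f$.
\item (\ref{lem:update-properties.3}): \textbf{[OU.5]} gives $(f \update g)\update g = f \update (g \override g) = f \update g$.
\item (\ref{lem:update-properties.4}): \textbf{[OU.5]} gives $(f \update g)\update f = f \update (f \override g)$, which is $f$ by \textbf{[OU.3]}.
\item (\ref{lem:update-properties.shelf}): rewrite both sides with \textbf{[OU.5]}. The left side is $f \update (h \override g)$ and the right side is $f \update ((g \update h)\override h)$. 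By \textbf{[OU.4]} with $x = h$ and $y = g$, we have $h \override g = (g\update h)\override h$, so the two sides agree.
\end{itemize}

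\textbf{Items that unfold the definition.}
\begin{itemize}
\item (\ref{lem:update-properties.6}): write $f \update (g \update f) = \rest{f}(\rest{g}(f\override g) \override f)$. Distribute $\rest{f}$ using \textbf{[$\override$.4]}. Then simplify $\rest{f}\rest{g}(f \override g) = \rest{g}\rest{f}(f\override g) = \rest{g}f$ by \textbf{[R.2]} and Lem~\ref{lem:override-facts}.(\ref{lem:override-facts.2}). This leaves $\rest{g}f \override f$, which is $f$ by \textbf{[$\override$.3]}.
\item (\ref{lem:update-properties.7}): put $x = f \update g$. Then $f \update x = \rest{f}(x \override f)$, and since $\rest{f}x = x$ by (\ref{lem:update-properties.1}), this equals $x \override \rest{f} f$ by \textbf{[$\override$.4]}. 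By Lem~\ref{lem:override-facts}.(\ref{lem:override-facts.9}), $x \override f = x$ because $\rest{x}f = \rest{f}f = f$, again by (\ref{lem:update-properties.1}). So the result is $x$.
\item (\ref{lem:update-properties.smile}): $f \update g = f$ says $\rest{f}(g\override f) = f$, that is, $f \le g \override f$. By Lem~\ref{lem:override-facts}.(\ref{lem:override-facts.3}), with the roles of $f$ and $g$ swapped, this holds iff $g \smile f$.
\item (\ref{lem:update-properties.commutative}): if $f \update g = g\update f$, then (\ref{lem:update-properties.1}) gives $\rest{f} = \rest{g}$. Hence $f \update g = \rest{g}(g \override f) = g$ by Lem~\ref{lem:override-facts}.(\ref{lem:override-facts.2}), and symmetrically $g \update f = f$, so $f = g$. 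The converse is trivial.
\end{itemize}

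The step I expect to need the most care is (\ref{lem:update-properties.7}), and to a lesser extent (\ref{lem:update-properties.6}). These require correctly distributing a restriction idempotent over $\override$ with \textbf{[$\override$.4]}, and invoking (\ref{lem:update-properties.1}) at the right moment so that Lem~\ref{lem:override-facts}.(\ref{lem:override-facts.9}) applies.
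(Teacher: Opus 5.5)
Your proof is correct, but several items take a genuinely different route from the paper's. Items (i), (ii), (v), (ix) and (x) go essentially as in the paper; the differences are in (iii), (iv), (vi), (vii) and (viii).

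\textbf{Items (iii) and (iv).} The paper unfolds $f \update g = \rest{f}(g \override f)$ and works through long computations with \textbf{[$\override$.1]}--\textbf{[$\override$.5]}. Those computations essentially repeat its proof of \textbf{[OU.5]} with $h=g$, resp.\ $h=f$. You instead cite \textbf{[OU.5]} from Thm~\ref{thm:ou-algebra}, together with \textbf{[OU.2]}, resp.\ \textbf{[OU.3]}. This is legitimate, since that theorem is proved independently of this lemma, and it makes both items one-liners. A side benefit is that (iii) and (iv), like (x), become visibly consequences of the Jackson--Stokes equations alone.

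\textbf{Items (vi) and (vii).} In (vi) you distribute $\rest{f}$ and finish with \textbf{[$\override$.3]}, where the paper invokes \textbf{[OU.4]}. In (vii) you pull $\rest{f}$ inside with \textbf{[$\override$.4]} and close with (i) and Lem~\ref{lem:override-facts}.(\ref{lem:override-facts.9}). The paper instead uses associativity and idempotence of $\override$.

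\textbf{Item (viii).} This is the most substantive difference. The paper derives $f=g$ by an equational chain through (vi), (vii), (iii) and (iv), staying entirely within the update operator. You use (i) to get $\rest{f}=\rest{g}$, and then Lem~\ref{lem:override-facts}.(\ref{lem:override-facts.2}) to conclude $f \update g = g$ and $g \update f = f$. Your argument is shorter and independent of the other items, but it relies on the restriction structure. The paper's chain, by contrast, would go through verbatim wherever those four update identities hold.
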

\begin{proof} These are mostly straightforward to check. 
  \begin{enumerate}[{\em (i)}]
  \item We compute $\rest{f \update g} \overset{\text{\tiny Def.}}{=} \rest{\rest{f}(g \override f)}    \overset{\text{\tiny [R.\ref{R3}]}}{=} \rest{f}\,\rest{g \override f} \overset{\text{\tiny\ref{lem:override-facts}.(\ref{lem:override-facts.7}})}{=}  \rest{f}\,\rest{f \override g} \overset{\text{\tiny [R.\ref{R3}]}}{=} \rest{\rest{f}(f \override g)} \overset{\text{\tiny\ref{lem:override-facts}.(\ref{lem:override-facts.2}})}{=}  \rest{f}$
  \item We compute $\rest{f} \update \rest{g}  \overset{\text{\tiny Def.}}{=} \rest{\rest{f}} (\rest{g} \override \rest{f})   \overset{\text{\tiny\ref{lem:restriction-basics}.(\ref{lem:restriction-basics.doublebar}})}{=}  \rest{f}(\rest{g} \override \rest{f}) \overset{\text{\tiny\ref{lem:override-facts}.(\ref{lem:override-facts.rest}})}{=}  \rest{f}\, \rest{g \override f} \overset{\text{\tiny [R.\ref{R3}]}}{=} \rest{\rest{f}(g \override f)}  \overset{\text{\tiny Def.}}{=}  \rest{f \update g}  \overset{\text{\tiny\ref{lem:update-properties}.(\ref{lem:update-properties.1}})}{=}  \rest{f} $
  \item We compute:
    \begin{gather*}
    (f \update g) \update g \overset{\text{\tiny Def.}}{=} \rest{f}(g \override f) \update g \overset{\text{\tiny Def.}}{=} \rest{\rest{f}(g \override f)}(g \override \rest{f}(g \override f)) \overset{\text{\tiny [R.\ref{R3}]}}{=} \rest{f}~\rest{g \override f}(g \override \rest{f}(g \override f)) \\
\overset{\text{\tiny\ref{lem:override-facts}.(\ref{lem:override-facts.7}})}{=}  \rest{f}~\rest{f \override g}(g \override \rest{f}(g \override f)) \overset{\text{\tiny [R.\ref{R3}]}}{=}  \rest{\rest{f}(f \override g)}(g \override \rest{f}(g \override f)) \overset{\text{\tiny\ref{lem:override-facts}.(\ref{lem:override-facts.2}})}{=}  \rest{f}(g \override \rest{f}(g \override f))\\
\overset{\text{\tiny [$\override$.\ref{override.4}]}}{=}  \rest{f}g \override \rest{f}\,\rest{f}(g \override f))  \overset{\text{\tiny\ref{lem:restriction-basics}.(\ref{lem:restriction-basics.idempotent}})}{=}  \rest{f}g \override \rest{f}(g \override f)) \overset{\text{\tiny [$\override$.\ref{override.4}]}}{=} \rest{f}g \override (\rest{f}g \override \rest{f}f) \\
\overset{\text{\tiny [$\override$.\ref{override.1}]}}{=} (\rest{f}g \override \rest{f}g) \override \rest{f}f  \overset{\text{\tiny\ref{lem:override-facts}.(\ref{lem:override-facts.idempotent}})}{=}  \rest{f}g \override \rest{f}f \overset{\text{\tiny [$\override$.\ref{override.4}]}}{=}  \rest{f}(g \override f)\overset{\text{\tiny Def.}}{=} f \update g
    \end{gather*}
  \item We compute: 
    \begin{gather*}
       (f \update g) \update f \overset{\text{\tiny Def.}}{=}  \rest{f}(g \override f) \update f \overset{\text{\tiny Def.}}{=}  \rest{\rest{f}(g \override f)}(f \override \rest{f}(g \override f)) \overset{\text{\tiny [R.\ref{R3}]}}{=} \rest{f}~\rest{g \override f}(f \override \rest{f}(g \override f))
      \\\overset{\text{\tiny\ref{lem:override-facts}.(\ref{lem:override-facts.7}})}{=} \rest{f}~\rest{f \override g}(f \override \rest{f}(g \override f)) \overset{\text{\tiny [R.\ref{R3}]}}{=} \rest{\rest{f}(f \override g)}(f \override \rest{f}(g \override f)) \overset{\text{\tiny\ref{lem:override-facts}.(\ref{lem:override-facts.2}})}{=}  \rest{f}(f \override \rest{f}(g \override f)) \overset{\text{\tiny [$\override$.\ref{override.4}]}}{=}  \rest{f}f \override \rest{f}\,\rest{f}(g \override f) \\
      \overset{\text{\tiny\ref{lem:restriction-basics}.(\ref{lem:restriction-basics.idempotent}})}{=} \rest{f}f \override \rest{f}(g \override f)  \overset{\text{\tiny [$\override$.\ref{override.4}]}}{=} \rest{f}f \override (\rest{f}g \override \rest{f}f)  \overset{\text{\tiny [R.\ref{R1}]}}{=} f \override (\rest{f}g \override f) \overset{\text{\tiny [$\override$.\ref{override.1}]}}{=} (f \override \rest{f}g) \override f   \overset{\text{\tiny [$\override$.\ref{override.2}]}}{=} f \override f  \overset{\text{\tiny\ref{lem:override-facts}.(\ref{lem:override-facts.idempotent}})}{=} f
    \end{gather*}
  \item We compute $f \update f \overset{\text{\tiny Def.}}{=} \rest{f}(f \override f) \overset{\text{\tiny\ref{lem:override-facts}.(\ref{lem:override-facts.idempotent}})}{=} \rest{f} f  \overset{\text{\tiny [R.\ref{R1}]}}{=} f$ 
  \item We compute $f \update (g \update f)  \overset{\text{\tiny Def.}}{=}  \rest{f}((g \update f) \override f) \overset{\text{\tiny [OU.4]}}{=} \rest{f}(f \override g) \overset{\text{\tiny\ref{lem:override-facts}.(\ref{lem:override-facts.2}})}{=} f$
  \item We compute $f \update (f \update g) \overset{\text{\tiny Def.}}{=} \rest{f}((f \update g) \override f) \overset{\text{\tiny Def.}}{=} \rest{f}(\rest{f}(g \override f) \override f)  \overset{\text{\tiny [R.\ref{R1}]}}{=} \rest{f}(\rest{f}(g \override f) \override \rest{f}f)  \\
      \overset{\text{\tiny [$\override$.\ref{override.4}]}}{=}  \rest{f} \, \rest{f} ((g \override f) \override f) \overset{\text{\tiny\ref{lem:restriction-basics}.(\ref{lem:restriction-basics.idempotent}})}{=} \rest{f} \, ((g \override f) \override f) \overset{\text{\tiny [$\override$.\ref{override.1}]}}{=} \rest{f} \, (g \override (f \override f)) \overset{\text{\tiny\ref{lem:override-facts}.(\ref{lem:override-facts.idempotent}})}{=} \rest{f}(g \override f) \overset{\text{\tiny Def.}}{=} f \update g$ 
  \item Suppose $f \update g = g \update f$. Then we compute: $f \overset{\text{\tiny\ref{lem:update-properties}.(\ref{lem:update-properties.6}})}{=}  f \update (g \update f) \overset{\text{\tiny Assum.}}{=} f \update (f \update g) \overset{\text{\tiny\ref{lem:update-properties}.(\ref{lem:update-properties.7}})}{=}  f \update g \overset{\text{\tiny\ref{lem:update-properties}.(\ref{lem:update-properties.3}})}{=} (f \update g) \update g
        \overset{\text{\tiny Assum.}}{=} (g \update f) \update g \overset{\text{\tiny\ref{lem:update-properties}.(\ref{lem:update-properties.4}})}{=} g$. So $f = g$. The converse direction is precisely Lem~\ref{lem:update-properties}.(\ref{lem:update-properties.idempotent}). 
\item By Lem \ref{lem:override-facts}.(\ref{lem:override-facts.3}) (and symmetry of $\smile$), we have that $f \leq g \override f$ if and only if $f \smile g$. In other words, $f \update g = \overline{f}(g \override f) = f$ if and only if $f \smile g$. 
\item We compute $(f \update g) \update h \overset{\text{\tiny [OU.5]}}{=} f \update (h \override g) \overset{\text{\tiny [OU.4]}}{=} f \update ((g \update h) \override h) \overset{\text{\tiny [OU.5]}}{=} (f \update h) \update (g \update h)$
  \end{enumerate}
\end{proof}

It is worth mentioning that Lem~\ref{lem:update-properties}.(\ref{lem:update-properties.shelf}) says that the update operator defines a \textbf{right-shelf}~\cite{Crans2004}. Shelves arise in, and are mostly studied in, the context of knot theory, where they model the third Redeimeister move. We find this connection intriguing and would like to investigate it further in future work. 

We conclude this section by providing some collapse results relating to properties of the update operator. We first observe that the update operator is commutative precisely when the ambient override restriction category is a preorder. 

\begin{lemma}
  Let $\X$ be an override restriction category. Its update operator $\update$ is commutative on each $\X(A,B)$, that is, $f \update g = g \update f$ for all $f,g \in \X(A,B)$, if and only if $\X$ is a preorder.
\end{lemma}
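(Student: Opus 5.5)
This follows almost directly from Lem~\ref{lem:update-properties}.(\ref{lem:update-properties.commutative}), which says that for any $f,g \in \X(A,B)$ we have $f \update g = g \update f$ if and only if $f = g$. So I would prove the two directions separately, reducing each to that equivalence.

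For the $\Leftarrow$ direction, suppose $\X$ is a preorder. Then any two parallel maps $f,g \in \X(A,B)$ are equal, so $f \update g = f \update f = g \update f$ trivially. (Alternatively, apply the converse half of Lem~\ref{lem:update-properties}.(\ref{lem:update-properties.commutative}), which is just Lem~\ref{lem:update-properties}.(\ref{lem:update-properties.idempotent}).) So $\update$ is commutative on every homset.

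For the $\Rightarrow$ direction, suppose $\update$ is commutative on each $\X(A,B)$. Take arbitrary $f,g \in \X(A,B)$. Then $f \update g = g \update f$, and the forward half of Lem~\ref{lem:update-properties}.(\ref{lem:update-properties.commutative}) gives $f = g$. Hence each homset has at most one element, so $\X$ is a preorder.

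There is no real obstacle here: all the work was done in proving Lem~\ref{lem:update-properties}.(\ref{lem:update-properties.commutative}), via the chain $f = f \update (g \update f) = f \update (f \update g) = f \update g = (f \update g)\update g = (g \update f) \update g = g$. That chain uses parts (\ref{lem:update-properties.6}), (\ref{lem:update-properties.7}), (\ref{lem:update-properties.3}) and (\ref{lem:update-properties.4}) of Lem~\ref{lem:update-properties}. One might optionally remark that, as observed earlier, such a preorder is automatically a trivial restriction category, since $\rest{f}$ and $1_A$ are parallel and so equal.
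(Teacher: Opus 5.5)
Your proof is correct and is the paper's argument: the paper also derives this lemma immediately from Lem~\ref{lem:update-properties}.(\ref{lem:update-properties.idempotent}) and Lem~\ref{lem:update-properties}.(\ref{lem:update-properties.commutative}), exactly as you do. Your closing remark, that such a preorder is necessarily a trivial restriction category, is also accurate, though the proof does not need it.
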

\begin{proof} This follows immediately from Lem~\ref{lem:update-properties}.(\ref{lem:update-properties.idempotent})+(\ref{lem:update-properties.commutative}). 
\end{proof}

We now show that the update operator is the first (resp. second) projection precisely if the ambient override restriction category is a restriction preorder (resp. trivial restriction). 

\begin{lemma}Let $\X$ be override restriction category. 
  \begin{enumerate}[{\em (i)}]
  \item \label{update.first.proj} Its update operator $\update$ is the first projection, that is, $f \update g= f$ for all $f,g \in \X(A,B)$ if and only if $\X$ is a restriction preorder. 
  \item \label{update.second.proj} Its update operator $\update$ is the second project, that is, $f \update g = g$ for all $f,g \in \X(A,B)$ is an override operator for $\X$ if and only if $\X$ is a trivial restriction category. 
  \end{enumerate}
\end{lemma}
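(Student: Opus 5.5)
Both parts follow from the update identities in Lem~\ref{lem:update-properties} combined with basic restriction category facts.

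For (\ref{update.first.proj}), we use Lem~\ref{lem:update-properties}.(\ref{lem:update-properties.smile}), which says $f \update g = f$ if and only if $f \smile g$. So $\update$ is the first projection on every homset exactly when every pair of parallel maps is compatible, which is the definition of a restriction preorder. Both directions are immediate.

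For (\ref{update.second.proj}), first suppose $f \update g = g$ for all parallel $f,g$. Applying Lem~\ref{lem:update-properties}.(\ref{lem:update-properties.1}) gives $\rest{g} = \rest{f \update g} = \rest{f}$ for all $f,g \in \X(A,B)$. To conclude every map is total, take $f \in \X(A,B)$ and consider the pair $\rest{f}, 1_A \in \X(A,A)$. Lem~\ref{lem:update-properties}.(\ref{lem:update-properties.2}) gives $\rest{f} \update 1_A = \rest{f}\update \rest{1_A} = \rest{f}$, using $\rest{1_A} = 1_A$ (from [R.1] and Lem~\ref{lem:restriction-basics}.(\ref{lem:restriction-basics.rest.leq.id})). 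By hypothesis $\rest{f} \update 1_A = 1_A$, so $\rest{f} = 1_A$. Hence $\X$ is a trivial restriction category.

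Conversely, if every map is total, then $f \update g = \rest{f}(g \override f) = g \override f$. Since $g$ is total, Lem~\ref{lem:override-facts}.(\ref{lem:override-facts.total}) gives $g \override f = g$, so $f \update g = g$, as in Example~\ref{ex:trivial-update}.

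No step is a real obstacle. The only point needing care is the $\Rightarrow$ direction of (\ref{update.second.proj}), where we must pick a pair of maps forcing totality; the pair $\rest{f}, 1_A$ does this. The phrase ``is an override operator for $\X$'' in (\ref{update.second.proj}) appears to be a typo and is ignored; we read the statement simply as ``$\update$ is the second projection.''
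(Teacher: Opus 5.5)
Your proof is correct and follows the same route as the paper. Part (i) comes from the characterisation $f \update g = f$ iff $f \smile g$, and for part (ii) you force totality using $\rest{f} \update \rest{g} = \rest{f}$ and get the converse from ``$g$ total implies $g \override f = g$''. The only difference is cosmetic: the paper evaluates $1_A \update \rest{f}$ where you evaluate $\rest{f} \update 1_A$, and your preliminary observation $\rest{g} = \rest{f}$ is never used.
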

\begin{proof} For (\ref{update.first.proj}), this follows immidietaly from Lem~\ref{lem:update-properties}.(\ref{lem:update-properties.smile}). For (\ref{update.second.proj}), first suppose that $f \update g = g$. In particular we have that $1_A \update \rest{f} = \rest{f}$. However since recall from that $1_A= \rest{1_A}$ and from Lem~\ref{lem:update-properties}.(\ref{lem:update-properties.2}) that $\rest{1_A} \update \rest{f} = \rest{1_A} = 1_A$, it follows that $\rest{f} = 1_A$. Therefore every map is total and thus $\X$ is a trivial restriction category. Conversely suppose that $\X$ is a trivial restriction category, so every map is total. Then by Lem \ref{lem:override-facts}.(\ref{lem:override-facts.total}), we have that $f \override g = f$, and therefore it is easy to see that $f \update g = g$. 
\end{proof}

%We remark that the situation regarding the associativity of the derived update operator is more complex that that regarding its commutativity. In sets and partial functions (Example~\ref{ex:par-override}) the derived update operator is not associative. However, in an override restiction category with trivial restriction structure (Example~\ref{ex:trivial-override}) we have $f \update g = g$, which is easily seen to be associative. We might then guess that the derived update operator is associative only when the restriction structure is trivial. Not so! In the situation of Example~\ref{ex:lattice-override} the restriction structure is nontrivial and we have $f \update g = f$, which is also associative. All we can say is that while the derived update operator need not be associative, it sometimes is.

\section{Starting from Update}\label{sec:starting-from-update}

Until now, we have taken the override operator as primitive and defined the update operator in terms of override. In this section we flip things around, taking the update operator as primitive and defining the override operator in terms of update. However, as we will explain below, in order to go from an update operator to an override operator, one requires binary joins. As such, we define an update restriction category as a restriction category with binary joins and an update operator. We will then show that every override restriction category is an update restriction category via the induced update operator, and conversely that every update restriction category is an override restriction category. Furthermore, these constructions are inverses of each other and thus the notion of an update restriction category is precisely the same as that of override restriction category. In doing so we both learn a bit about the update operator and provide some justification for the decision to take the override operator as primitive, in that it is much more economical than the alternative explored here.

We begin by explain how in an override restriction category, we can recapture the override operator from the update operator and the join. 

\begin{lemma}\label{lem:uptate-to-override} In an override restriction category $\X$, for any $f,g \in \X(A,B)$, we have that $(g \update f) \smile f$ and $f \override g = (g \update f) \vee f$. 
\end{lemma}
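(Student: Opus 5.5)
The plan is to prove compatibility first and then identify the join with the override using the join description from Thm~\ref{thm:override-joins}.

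\emph{Compatibility.} By definition $g \update f = \rest{g}(f \override g)$. We have $f \leq f \override g$ by Lem~\ref{lem:override-facts}.(\ref{lem:override-facts.2}). Also $\rest{g}(f \override g) \leq f \override g$, since any map of the form $\rest{e}h$ lies below $h$. This holds because $\rest{\rest{e}h}\,h = \rest{e}\rest{h}h = \rest{e}h$, using [R.\ref{R3}] and [R.\ref{R1}]. So $g \update f$ and $f$ are both below the common map $f \override g$. Two maps below a common map are compatible: if $a \leq c$ and $b \leq c$, then
\[\rest{b}a = \rest{b}\rest{a}c = \rest{a}\rest{b}c = \rest{a}b\]
by [R.\ref{R2}]. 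Hence $(g \update f) \smile f$.

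\emph{Identifying the join.} Since $\X$ is an override restriction category, Thm~\ref{thm:override-joins} says the join of compatible maps is their override. Thus $(g \update f) \vee f = (g \update f) \override f$. By [OU.4], proved in Thm~\ref{thm:ou-algebra}, this equals $f \override g$.

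Alternatively, one can avoid [OU.4] and argue directly. By Lem~\ref{lem:override-facts}.(\ref{lem:override-facts.4}), $(g\update f) \override f \leq f \override g$. For equality, compare restrictions: by Lem~\ref{lem:override-facts}.(\ref{lem:override-facts.rest}), Lem~\ref{lem:update-properties}.(\ref{lem:update-properties.1}) and Lem~\ref{lem:override-facts}.(\ref{lem:override-facts.7}),
\[\rest{(g\update f)\override f} = \rest{g}\override\rest{f} = \rest{f\override g}.\]
If $a \leq b$ and $\rest{a} = \rest{b}$, then $a = \rest{a}b = \rest{b}b = b$.

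There is no real obstacle here. The only point needing care is noting that the join exists and equals the override by Thm~\ref{thm:override-joins}, after which the statement is essentially [OU.4].
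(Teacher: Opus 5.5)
Your proof is correct and follows the paper's route. Compatibility comes from $f \leq f \override g$ together with commuting restriction idempotents; then Thm~\ref{thm:override-joins} turns the join into an override, and [OU.4] finishes. Phrasing the compatibility step as ``two maps below a common upper bound are compatible'' is slightly cleaner than the paper's direct computation $\rest{f}(g \update f) = \rest{g}f$, which silently needs $\rest{g \update f} = \rest{g}$ to conclude $\smile$.
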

\begin{proof} We first compute that $\rest{f}(g \update f) \overset{\text{\tiny Def.}}{=} \rest{f}\rest{g}(f \override g) \overset{\text{\tiny [R.\ref{R3}]}}{=} \rest{g}\rest{f}(f \override g) \overset{\text{\tiny\ref{lem:override-facts}.(\ref{lem:override-facts.2}})}{=} \rest{g}f$. Therefore we have that $(g \update f) \smile f$. Then {\bf [OU.4]} tells us precisely that $f \override g \overset{\text{\tiny [OU.\ref{OU4}]}}{=} (g \update f) \override f \overset{\text{\tiny Def.}}{=} (g \update f) \vee f$. 
\end{proof}

Thus, it is possible to express the override operator in terms of the update operator and binary joins. The question is whether or not there are sensible axioms for the update operator that allow us to recover the override operator in this manner. We propose the following. 

\begin{definition}\label{def:update-rest-cat}
  An \emph{update restriction category} is a restriction category $\mathbb{X}$ that has binary joins together with a family of binary operations $ -\update- : \X(A,B) \times \X(A,B) \to \X(A,B)$ (for all pairs of objects $A,B \in \mathbb{X}$) such that:
  \begin{enumerate}[{\bf [$\update$.1]}]
  \item \label{update.1} $f(g \update h) = fg \update fh$ for all $f \in \X(A,B)$ and $g,h \in \X(B,C)$
  \item \label{update.2} $\rest{f \update g} = \rest{f}$
  \item \label{update.3} $\rest{f}g \update f = \rest{g}f$ for all $f,g \in \X(A,B)$
  \item \label{update.4} $\rest{g}h \update \rest{f}h = \rest{g}h$ for all $f \in \X(A,B)$, $g \in \X(A,C)$ and $h \in \X(A,D)$
  \item \label{update.5} $(((h \update g) \vee g) \update f) \vee f = (h \update ((g \update f) \vee f)) \vee ((g \update f) \vee f)$ for all $f,g,h \in \X(A,B)$
  \end{enumerate}
  We call $\override$ an \textbf{update operator} and $f \update g$ is called $f$ update $g$. 
\end{definition}

We acknowledge that, while axioms {\bf [$\update$.1-4]} are reasonably comprehensible, axiom {\bf [$\update$.5]} is monstrous. It is obtained from axiom {\bf [$\override$.\ref{override.1}]} by replacing $f \override g$ with $(g \update f) \vee f$. Ideally, we would like to be able to replace {\bf [$\update$.5]} with a number of simpler axioms that imply it, but as of yet we have been unable to find such axioms. 

We are now ready to show that any update restriction category is an override restriction category via the derived override operator:
\begin{proposition}\label{prop:update-to-override}
  Let $\X$ be an update restriction category with update operator $\update$. Then $\X$ is an override restriction, with the same underlying restriction structure and override operator $\override$ defined as $f \override g = (g \update f) \vee f$. Moreover, the induced update operator of this override operator (Def~\ref{def:update}) is precisely $\update$. 
\end{proposition}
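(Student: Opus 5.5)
First I check that $f \override g = (g \update f) \vee f$ is well-defined, i.e.\ that $(g \update f) \smile f$. By {\bf [$\update$.2]}, $\rest{g \update f} = \rest{g}$. It therefore suffices to show $\rest{f}(g \update f) = \rest{g}f$.
\begin{itemize}
\item By {\bf [$\update$.1]}, $\rest{f}(g \update f) = \rest{f}g \update \rest{f}f = \rest{f}g \update f$.
\item By {\bf [$\update$.3]}, $\rest{f}g \update f = \rest{g}f$.
\end{itemize}
I record this as a standing identity, $\rest{f}(g\update f) = \rest{g}f$, and use it repeatedly. It also gives $\rest{f \override g} = \rest{g} \vee \rest{f}$ by Lem~\ref{lem:restriction-basics}.(\ref{lem:restriction-basics.rest.join.rest}).

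Next I verify the axioms in the order {\bf [$\override$.4]}, {\bf [$\override$.2]}, {\bf [$\override$.3]}, {\bf [$\override$.5]}, {\bf [$\override$.1]}.
\begin{itemize}
\item {\bf [$\override$.4]}: combine {\bf [$\update$.1]} with {\bf [J.3]}.
\item {\bf [$\override$.2]}: we need $(\rest{f}g \update f) \vee f = \rest{g}f \vee f = f$. Apply {\bf [$\update$.3]}, then Lem~\ref{lem:restriction-basics}.(\ref{lem:restriction-basics.rest.join.idempotent.2}).
\item {\bf [$\override$.3]}: we need $(f \update \rest{g}f) \vee \rest{g}f = f$. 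Write $f \update \rest{g} f = \rest{f}f \update \rest{g}f$ and apply {\bf [$\update$.4]} with $h = f$; this gives $f$. Then $f \vee \rest{g}f = f$ by commutativity and Lem~\ref{lem:restriction-basics}.(\ref{lem:restriction-basics.rest.join.idempotent.2}).
\item {\bf [$\override$.5]}: the left side is $\rest{f\override g}h = (\rest{g}\vee\rest{f})h = \rest{g}h \vee \rest{f}h$, using Lem~\ref{lem:restriction-basics}.(\ref{lem:restriction-basics.rest.join.composition}). The right side is $(\rest{g}h \update \rest{f}h) \vee \rest{f}h = \rest{g}h \vee \rest{f}h$ by {\bf [$\update$.4]}. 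So the two sides agree.
\item {\bf [$\override$.1]}: unfolding the definition turns associativity literally into {\bf [$\update$.5]}. The only care needed is to check that every join appearing there is between compatible maps, which is the standing identity again.
\end{itemize}

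Finally I check that the induced update recovers $\update$. By Def~\ref{def:update} the induced operator is $\rest{f}(g \override f) = \rest{f}((f \update g)\vee g)$. By {\bf [J.3]} this equals $\rest{f}(f\update g) \vee \rest{f}g$. By {\bf [$\update$.2]}, $\rest{f}(f\update g) = f \update g$. The remaining task is $(f\update g)\vee \rest{f}g = f\update g$, i.e.\ $\rest{f}g \leq f \update g$. Applying the standing identity with the roles of $f$ and $g$ swapped gives $\rest{g}(f\update g) = \rest{f}g$. Hence $\rest{\rest{f}g}(f\update g) = \rest{f}\rest{g}(f \update g) = \rest{f}\rest{f}g = \rest{f}g$. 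So the join collapses by [J.1]/[J.2], since joins with a smaller map return the larger.

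I expect the main obstacle to be bookkeeping rather than ideas. One must keep the restriction axioms [R.1]–[R.4] and the join identities straight in the {\bf [$\override$.3]} and induced-update steps. One must also confirm that all the joins inside {\bf [$\update$.5]} are between compatible maps, so that the axiom even typechecks. For that I would first show $(g\update f)\vee f \smile$-type facts via the standing identity.
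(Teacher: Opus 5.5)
Your proposal is correct and follows essentially the paper's route: the same compatibility computation $\rest{f}(g \update f) = \rest{f}g \update f = \rest{g}f$, and the same axiom-by-axiom checks, with \textbf{[$\override$.1]} reducing literally to \textbf{[$\update$.5]} and \textbf{[$\update$.4]} used for \textbf{[$\override$.3]} and \textbf{[$\override$.5]}. The only divergence is the final step. There the paper rewrites $\rest{f}$ as $\rest{f \update g}$ using \textbf{[$\update$.2]} and applies \textbf{[J.1]} to get $\rest{f \update g}((f \update g) \vee g) = f \update g$ in one line, whereas your distribution via \textbf{[J.3]} followed by the check $\rest{f}g \leq f \update g$ is valid but a longer detour.
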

\begin{proof} We first need to explain why this override operator is well-defined. So let $f,g \in \X(A,B)$. Then we compute that $\rest{f}(g \update f) \overset{\text{\tiny [$\update$.\ref{update.1}]}}{=} \rest{f}g \update \rest{f}f \overset{\text{\tiny [R.\ref{R1}]}}{=} \rest{f}g \update f \overset{\text{\tiny [$\update$.\ref{update.3}]}}{=} \rest{g}f \overset{\text{\tiny [$\update$.\ref{update.2}]}}{=} \rest{g \update f}f$. Therefore $(g \update f) \smile f$, so we may take their join, and thus our proposed override operator $f \override g = (g \update f) \vee f$ is well defined. Now we show that is satisfies the five override operator axioms. 
  \begin{enumerate}[{\bf [$\override$.1]}]
  \item $(f \override g) \override h \overset{\text{\tiny Def.}}{=} (h \update ((g \update f) \vee f)) \vee ((g \update f) \vee f)   \overset{\text{\tiny [$\update$.\ref{update.5}]}}{=} (((h \update g) \vee g) \update f) \vee f \overset{\text{\tiny Def.}}{=} f \override (g \override h)$. 
  \item  $f \override \rest{f}g \overset{\text{\tiny Def.}}{=} (\rest{f}g \update f) \vee f \overset{\text{\tiny [$\update$.\ref{update.3}]}}{=} \rest{g}f \vee f \overset{\text{\tiny\ref{lem:restriction-basics}.(\ref{lem:restriction-basics.rest.join.idempotent.2}})}{=}  f$. 
  \item $\rest{g}f \override f \overset{\text{\tiny Def.}}{=} (f \update \rest{g}f) \vee \rest{g}f \overset{\text{\tiny [R.\ref{R1}]}}{=}(\rest{f}f \update \rest{g}f) \vee \rest{g}f \overset{\text{\tiny [$\update$.\ref{update.4}]}}{=} \rest{f}f \vee \rest{g}f \overset{\text{\tiny [R.\ref{R1}]}}{=} f \vee \rest{g}f \overset{\text{\tiny\ref{lem:restriction-basics}.(\ref{lem:restriction-basics.rest.join.idempotent.2}})}{=}  f$. 
  \item $f(g \override h) \overset{\text{\tiny Def.}}{=} f((h \update g) \vee g) \overset{\text{\tiny [J.3]}}{=}   f(h \update g) \vee fg \overset{\text{\tiny [$\update$.\ref{update.1}]}}{=} (fh \update fg) \vee fg \overset{\text{\tiny Def.}}{=} fg \override fh$. 
  \item $\rest{f \override g}h \overset{\text{\tiny Def.}}{=} \rest{(g \update f) \vee f}h \overset{\text{\tiny\ref{lem:restriction-basics}.(\ref{lem:restriction-basics.rest.join.rest}})}{=} (\rest{g \update f} \vee \rest{f})h \overset{\text{\tiny [$\update$.\ref{update.2}]}}{=} (\rest{g} \vee \rest{f})h \overset{\text{\tiny\ref{lem:restriction-basics}.(\ref{lem:restriction-basics.rest.join.composition}})}{=} \rest{g}h \vee \rest{f}h \overset{\text{\tiny [$\update$.\ref{update.5}]}}{=} (\rest{g}h \update \rest{f}h) \vee \rest{f}h \overset{\text{\tiny Def.}}{=} \rest{f}h \override \rest{g}h$. 
  \end{enumerate}
Thus $\override$ is an override operator. Now let $\blackdiamond$ be the induced updated operator from $\override$ via Def~\ref{def:update}. Then we compute $f \blackdiamond g \overset{\text{\tiny Def.}}{=} \rest{f}(g \override f) \overset{\text{\tiny Def.}}{=} \rest{f}((f \update g) \vee g) \overset{\text{\tiny [$\update$.\ref{update.2}]}}{=} \rest{f \update g}((f \update g) \vee g) \overset{\text{\tiny [J.2]}}{=}  f \update g$. 
\end{proof}

Conversely, we now show that an override restriction category with its induced update operator is also an update restriction category and that the induced override operator from the above proposition is again the starting override operator. 

\begin{proposition}\label{prop:override-to-update}
  Let $\X$ be an override restriction category with override operator $\override$. Then $\X$ is an update restriction, with the same underlying restriction structure and update operator as defined in Def~\ref{def:update}. Moreover, the induced override operator of this udpate operator (Prop~\ref{prop:update-to-override}) is precisely $\override$. 
\end{proposition}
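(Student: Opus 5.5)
Binary joins come for free from Thm~\ref{thm:override-joins}, with $f \vee g = f \override g$ whenever $f \smile g$. It remains to check the axioms {\bf [$\update$.1]}--{\bf [$\update$.5]} for $f \update g = \rest{f}(g \override f)$, and then that $(g \update f) \vee f = f \override g$.

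For {\bf [$\update$.2]}, this is exactly Lem~\ref{lem:update-properties}.(\ref{lem:update-properties.1}). For {\bf [$\update$.1]}, I will compute
\[ fg \update fh = \rest{fg}(fh \override fg) = \rest{fg}f(h \override g) = f\rest{g}(h \override g) = f(g \update h), \]
using {\bf [$\override$.4]} and then [R.4].

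For {\bf [$\update$.3]}, I will expand $\rest{f}g \update f = \rest{\rest{f}g}(f \override \rest{f}g)$. By [R.3] the restriction equals $\rest{f}\rest{g}$, and by {\bf [$\override$.2]} the override collapses to $f$. This gives $\rest{f}\rest{g}f = \rest{g}\rest{f}f = \rest{g}f$ by [R.2] and [R.1].

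For {\bf [$\update$.4]}, I will use Lem~\ref{lem:update-properties}.(\ref{lem:update-properties.smile}): it suffices that $\rest{g}h \smile \rest{f}h$. Both sides of the compatibility equation reduce to $\rest{f}\rest{g}h$, using [R.3], the idempotence of restriction idempotents and [R.2], so this is routine.

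For the last claim and {\bf [$\update$.5]}, I will use Lem~\ref{lem:uptate-to-override}, which says precisely that $(g \update f) \smile f$ and $(g \update f) \vee f = f \override g$. The first part is needed so that the joins in {\bf [$\update$.5]} are defined at all. Rewriting every occurrence of $(b \update a) \vee a$ as $a \override b$, the left side of {\bf [$\update$.5]} becomes $f \override (g \override h)$. The right side becomes $(f \override g) \override h$, since the inner expression $(g \update f)\vee f$ is $f \override g$ and the outer pattern is then $(h \update k) \vee k = k \override h$ with $k = f \override g$. So {\bf [$\update$.5]} reduces to {\bf [$\override$.1]}.

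The same lemma gives that the induced override $(g \update f) \vee f$ of Prop~\ref{prop:update-to-override} equals $f \override g$. The only care needed, rather than any real obstacle, is to justify the compatibility side-conditions each time a join is rewritten in {\bf [$\update$.5]}; these all come from the first half of Lem~\ref{lem:uptate-to-override}.
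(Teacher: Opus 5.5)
Your proof is correct and follows the paper's argument axiom by axiom. This includes reducing \textbf{[$\update$.5]} and the final claim to Lem~\ref{lem:uptate-to-override} together with \textbf{[$\override$.1]}, and you also make explicit that binary joins come from Thm~\ref{thm:override-joins}. The only deviation is \textbf{[$\update$.4]}: the paper computes directly, ending in $\rest{f}\,\rest{g}h \vee \rest{g}h = \rest{g}h$, whereas you check $\rest{g}h \smile \rest{f}h$ and invoke Lem~\ref{lem:update-properties}.(\ref{lem:update-properties.smile}), which is a valid and slightly shorter shortcut.
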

\begin{proof} We show that $\update$ satisfies the axioms of an update operator. 
  \begin{enumerate}[{\bf [$\diamond$.1]}]
  \item $f(g \update h) \overset{\text{\tiny Def.}}{=} f\rest{g}(h \override g) \overset{\text{\tiny [R.\ref{R4}]}}{=} \rest{fg}f(h \override g) \overset{\text{\tiny [$\override$.\ref{override.4}]}}{=}  \rest{fg}(fh \override fg) \overset{\text{\tiny Def.}}{=} fg \update fh$
       \item This is Lemma~\ref{lem:update-properties}.(\ref{lem:update-properties.1}) 
       \item $\rest{f}g \update f \overset{\text{\tiny Def.}}{=}  \rest{\rest{f}g}(f \override \rest{f}g) \overset{\text{\tiny [$\override$.\ref{override.2}]}}{=} \rest{\rest{f}g}f \overset{\text{\tiny [R.\ref{R3}]}}{=} \rest{f}~\rest{g}f \overset{\text{\tiny [R.\ref{R2}]}}{=} \rest{g}~\rest{f}f \overset{\text{\tiny [R.\ref{R1}]}}{=} \rest{g}f$. 
      \item $\rest{g}h \update \rest{f}h \overset{\text{\tiny Def.}}{=}  \rest{\rest{g}h}(\rest{f}h \override \rest{g}h) \overset{\text{\tiny [R.\ref{R3}]}}{=} \rest{g}~\rest{h}(\rest{f}h \override \rest{g}h) \overset{\text{\tiny [$\override$.\ref{override.4}]}}{=} \rest{g}~\rest{h}~\rest{f}h \override \rest{g}~\rest{h}~\rest{g}h \overset{\text{\tiny [R.\ref{R2}]}}{=} \rest{g}~\rest{f}~\rest{h}h \override \rest{g}~\rest{g}~\rest{h}h \overset{\text{\tiny [R.\ref{R1}]}}{=} \rest{f}~\rest{g}h \override \rest{g}~\rest{g}h \overset{\text{\tiny\ref{lem:restriction-basics}.(\ref{lem:restriction-basics.idempotent}})}{=} \rest{f}~\rest{g}h \override \rest{g}h \overset{\text{\tiny Def.}}{=} \rest{f}~\rest{g}h \vee\rest{g}h \overset{\text{\tiny\ref{lem:restriction-basics}.(\ref{lem:restriction-basics.rest.join.idempotent.2}})}{=} \rest{g}h$. 
  \item $(((h \update g) \vee g) \update f) \vee f \overset{\text{\tiny Lem \ref{lem:uptate-to-override}}}{=} f \override (g \override h) \overset{\text{\tiny [$\override$.\ref{override.1}]}}{=} (f \override g) \override h \overset{\text{\tiny Lem \ref{lem:uptate-to-override}}}{=}  (h \update ((g \update f) \vee f)) \vee ((g \update f) \vee f)$. 
  \end{enumerate}
Thus $\update$ is an update operator. 

Now let $\blacktriangleright$ be the induced override operator from $\update$ via Prop~\ref{prop:update-to-override}. Then we compute  $f \blacktriangleright g \overset{\text{\tiny Def.}}{=}(g \update f) \vee f \overset{\text{\tiny Def.}}{=} \rest{g}(f \override g) \vee f \overset{\text{\tiny Def.}}{=}  \rest{g}(f \override g) \override f \overset{\text{\tiny\ref{lem:override-facts}.(\ref{lem:override-facts.2}})}{=}\rest{g}(f \override g) \override \rest{f}(f \override g) \overset{\text{\tiny [$\override$.\ref{override.5}]}}{=} \rest{g \override f}(f \override g) \overset{\text{\tiny\ref{lem:override-facts}.(\ref{lem:override-facts.7}})}{=}  \rest{f \override g}(f \override g) \overset{\text{\tiny [R.\ref{R1}]}}{=}  f \override g$. 
\end{proof}

Therefore, the constructions of Prop~\ref{prop:update-to-override} and Prop~\ref{prop:override-to-update} are inverses of each other, and thus we can conclude that: 

\begin{theorem}\label{thm:override=update} An override restriction category is precisely an update restriction category. 
\end{theorem}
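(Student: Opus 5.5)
The statement amounts to exhibiting a bijection, on a fixed restriction category $\X$, between override operators and update operators making $\X$ an update restriction category. Both directions are already available, so the plan is to assemble Prop~\ref{prop:update-to-override} and Prop~\ref{prop:override-to-update} and check that they are mutually inverse.

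\emph{From override to update.} Let $\override$ be an override operator on $\X$. By Thm~\ref{thm:override-joins}, $\X$ has binary joins; since joins are unique by \textbf{[J.2]}, this is a property of $\X$ rather than extra structure. Prop~\ref{prop:override-to-update} then shows that the induced operator $f \update g = \rest{f}(g \override f)$ of Def~\ref{def:update} satisfies \textbf{[$\update$.1--5]}. So $\X$ is an update restriction category.

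\emph{From update to override.} Let $\X$ be an update restriction category, so it has binary joins and an update operator $\update$. Prop~\ref{prop:update-to-override} shows that $f \override g = (g \update f) \vee f$ is an override operator on the same restriction structure.

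\emph{The round trips are identities.} There are two composites to check.
\begin{itemize}
\item Starting from $\update$, passing to $\override$ and back, the last clause of Prop~\ref{prop:update-to-override} gives $\rest{f}((f\update g)\vee g) = f \update g$. So we recover the original update operator.
\item Starting from $\override$, passing to $\update$ and back, the last clause of Prop~\ref{prop:override-to-update} gives $(g \update f)\vee f = f \override g$. This also follows from Lem~\ref{lem:uptate-to-override}. So we recover the original override operator.
\end{itemize}

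In both constructions the join used is the unique join of $\X$. In particular, when passing from $\override$ to $\update$ and back, the join $\vee$ coincides with $\override$ on compatible pairs by Thm~\ref{thm:override-joins}. This coincidence is exactly what the final computation in Prop~\ref{prop:override-to-update} uses.

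The only point requiring care is this bookkeeping: "having binary joins" carries no extra data, so the two structures correspond bijectively on the same underlying restriction category. Given the two propositions, there is no real obstacle; all the substantive work, in particular verifying \textbf{[$\update$.5]} and \textbf{[$\override$.1]}, was done in those propositions.
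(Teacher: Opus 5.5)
Your proposal is correct and matches the paper's argument: the theorem follows by combining Prop~\ref{prop:update-to-override} and Prop~\ref{prop:override-to-update}, whose final clauses show that the two constructions are mutually inverse. Your added remark that binary joins are unique makes explicit the bookkeeping the paper leaves implicit: the join carried by an update restriction category is forced, and by Thm~\ref{thm:override-joins} it agrees with $\override$ on compatible pairs.
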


\section{Override for Classical Restriction Categories}\label{sec:classical}

In this section we show that \textit{classical} restriction categories are override restriction categories, and in fact, they have a unique override operator. Briefly, a classical restriction category is a restriction with binary joins that also has \textit{restriction zero maps} and \textit{complements}, which allows for Boolean classical reasoning within a restriction category framework. For an in-depth introduction to classical restriction categories, we refer the reader to \cite{Cockett2023,Cockett2009Boolean}. 

We begin by reviewing restriction zero maps, which are zero maps in the usual sense whose restriction idempotent is also a zero map. 

\begin{definition}\cite[Sec 2.2]{Coc07} A restriction category $\X$ is said to have \textbf{restriction zero maps} if for all objects $A$ and $B$ of $\X$ there is a morphism $0_{A,B} \in \X(A,B)$ such that $\rest{0_{A,B}} = 0_{A,A}$ and for all objects $A,B,C,D$ of $\X$ and all morphisms $f \in \X(B,C)$ we have $0_{A,B}f = 0_{A,C}$ and $f0_{C,D} = 0_{B,D}$. As shorthand, we denote zero maps simply as $0$. 
\end{definition}

Intuitively, restriction zero maps are maps which are nowhere defined. We first observe that in an override restriction category, restriction zero maps behave well with respect to the override and update operators. 

\begin{lemma}\label{lem:override.zero}
  Let $\X$ be an override restriction category with restriction zero maps. Then for all $f \in \X(A,B)$, we have that: 
  \begin{enumerate}[{\em (i)}]
  \item \label{lem:override.zero.1} $0 \override f = f =f \override 0$, that is, $0$ is a unit for $\override$.
  \item \label{lem:override.zero.2} $0 \update f = 0$ and $f \update 0 = f$, that is, $0$ is a left-annihilator and right-unit for $\update$.
  \end{enumerate}
\end{lemma}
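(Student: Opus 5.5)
Each of the four identities should follow in one or two lines from the override axioms, Lem~\ref{lem:override-facts}, and the defining equations of restriction zero maps. The key observation is that $0$ is itself a restriction idempotent: $0_{A,B} = 0_{A,A}0_{A,B} = \rest{0}\,0$. Moreover, every map absorbs a zero when composed with it on either side.

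For (\ref{lem:override.zero.1}), to get $f \override 0 = f$ we write $0 = \rest{f}\,0$, which holds since composing with a zero gives zero. Then {\bf [$\override$.2]} yields $f \override 0 = f \override \rest{f}0 = f$. Alternatively, Lem~\ref{lem:override-facts}.(\ref{lem:override-facts.9}) gives the same conclusion, since $\rest{f}0 = 0$.

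For $0 \override f = f$, we observe that $0 = 0_{A,A} f = \rest{0_{A,B}}\, f$, so $0 \leq f$ in the sense of Def~\ref{def:restcat}. Lem~\ref{lem:override-facts}.(\ref{lem:override-facts.5}) then gives $0 \override f = f$. A direct route also works: write $0 = \rest{0}f$ and apply {\bf [$\override$.3]}, which gives $\rest{0}f \override f = f$.

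For (\ref{lem:override.zero.2}), we unfold Def~\ref{def:update}. For the first identity, $0 \update f = \rest{0}(f \override 0) = 0_{A,A}(f \override 0) = 0$, since $0_{A,A}$ composed with anything is zero. For the second, $f \update 0 = \rest{f}(0 \override f) = \rest{f}f = f$, using part (\ref{lem:override.zero.1}) and {\bf [R.1]}.

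None of these steps is a real obstacle. The only points needing care are the object subscripts on the zero maps: one must check that $\rest{0_{A,B}} = 0_{A,A}$ and the absorption laws produce maps of the right types, for instance $\rest{f}\,0_{A,B} = 0_{A,B}$ and $0_{A,A}f = 0_{A,B}$.
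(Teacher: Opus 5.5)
Your proof is correct and is precisely the routine check the paper leaves as an exercise: {\bf [$\override$.2]} with $\rest{f}\,0 = 0$, {\bf [$\override$.3]} (or Lem~\ref{lem:override-facts}.(\ref{lem:override-facts.5})) with $\rest{0_{A,B}}\,f = 0_{A,A}f = 0_{A,B}$, and unfolding Def~\ref{def:update} for the two update identities all go through with the correct object subscripts. One slip is purely a matter of wording: $0_{A,B}$ is not itself a restriction idempotent unless $A = B$; what you actually use is that $\rest{0_{A,B}} = 0_{A,A}$ is one, so that $0_{A,B} = \rest{0_{A,B}}\,h$ for every $h \in \X(A,B)$.
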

\begin{proof} These are straightforward to check, so we leave them as an exercise for the reader. 
\end{proof}

Before we review relative complements, it is useful to recall the notion of being disjoint. 

\begin{definition} \cite[Prop 6.2]{Cockett2009Boolean} In a restriction category $\mathbb{X}$ with restriction zero maps, for maps $f,g \in \X(A,B)$, we say that $f$ and $g$ are \textbf{disjoint}, written $f \perp g$, if $\overline{f}g = 0$ (or equivalently $\overline{g}f =0)$. 
\end{definition}

Intuitively, $f$ and $g$ are disjoint if where one is defined the other is not, or in other words, the domains of definition of $f$ and $g$ do not overlap. %With this, we are now ready to recall the definition of classical restriction categories. 

\begin{definition}\cite[Sec 13]{Cockett2009Boolean}\cite[Def 4.8]{Cockett2023}\label{def:classical-rest-cat}
  A \emph{classical restriction category} is a restriction category $\X$ which has binary joins and restriction zeroes such that for maps $f,g \in \X(A,B)$ with $f \leq g$, there exists a (necessarily) unique map $g \backslash f \in \X(A,B)$ such that $g \backslash f \perp f$ and $(g \perp f) \vee f = g$. We call $g \backslash f$ the \textbf{relative complement} of $f$ in $g$. Moreover, for any map $f \in \X(A,B)$ the \textbf{complement} of its restriction idempotent $\rest{f}$ is the relative complement of $\rest{f}$ in the identity $1_A$, that is, the map $\rest{f}^c \in \mathbb{X}(A,A)$ defined as $\rest{f}^c= 1_A\backslash \rest{f}$. 
\end{definition}

Intuitively, we interpret $g \backslash f$ as doing $g$ when $f$ is undefined and being undefined where $f$ is defined. Now recall that by Lemma~\ref{lem:restriction-basics}.(\ref{lem:restriction-basics.rest.leq.id}), for restriction idempotents $\rest{f}$ we always have that $\rest{f} \leq 1_A$, so taking its complement $\rest{f}^c$ is well-defined, which we interpret as capturing where $f$ is not defined.  

\begin{example} The restriction category $\mathsf{Par}$ (Example~\ref{ex:par-rest-cat}) is a classical restriction category where the restriction zero is the partial function $0_{X,Y}: X \to Y$ which is nowhere defined, while for partial functions $f: X \to Y$ and $g: X \to Y$ such that $f \leq g$, the relative complement is the partial function $g \backslash f: X \to Y$ is defined as follows: $(g \backslash f) (x) = \begin{cases} g(x) & \text{ if } f(x) \uparrow \text{ and } g(x) \downarrow \\
\uparrow & \text{ if } f(x) \downarrow \text{ or } g(x) \uparrow  \end{cases}$. In particular, the complement of the restriction idempotent $\rest{f}: X \to X$ is the partial function $\rest{f}^c: X \to X$ defined as follows: $\rest{f}^c(x) = \begin{cases} x & \text{ if } f(x) \uparrow  \\
\uparrow & \text{ if } f(x) \downarrow   \end{cases}$. 
\end{example}

\begin{example} The restriction category $\mathsf{Cring}^{op}_\bullet$ (Example~\ref{ex:cring-rest-cat}) is a classical restriction category, so $\mathsf{Cring}_\bullet$ is a coclassical corestriction category, where the corestriction zero is simply the zero morphism $0_{R,S}: R \to S$, while for non-unital ring morphisms $f: R \to S$ and $g: R\to S$ such that $f \leq g$, the relative complement is the non-unital ring morphism $g \backslash f: R\to S$ defined as follows $(g \backslash f) (x) = g(x) - f(1)g(x)$. In particular, the complement of the corestriction idempotent $\rest{f}: R\to R$ is the non-unital ring morphism $\rest{f}^c: R \to R$ defined as follows $\rest{f}^c(x) = x-f(1)x$. 
\end{example}

\begin{example} A Boolean algebra $(B, \wedge, \vee, \top, \bot, \neg)$, seen as a restriction category (Example~\ref{ex:meet-rest-cat}), is a classical restriction category where the restriction zero is the bottom element $0 = \bot$, and where for $x,y \in B$, the relative complement is $y \backslash x = \neg x \wedge y$. Thus since recall that $\rest{x} = x$ for all $x \in B$, its complement is just negation, $\rest{x}^c = \neg x$. 
\end{example}

We will now show that every classical restriction category admits a unique override operator capturing precisely the intuition that $f \override g$ is given by doing $f$ where $f$ is defined and doing $g$ where $f$ is not defined, which we can do using joins and complements. Here is first some basic identities that will be useful to prove our theorem below. 

\begin{lemma}[\cite{Cockett2023,Cockett2009Boolean}]\label{lem:classical} In a restriction category $\X$ with binary joins and restriction zeroes, we have: 
  \begin{enumerate}[{\em (i)}]
  \item \label{lem:classical.perp.smile}For maps $f,g \in \X(A,B)$, if $f \perp g$ then $f \smile g$.
  \item \label{lem:classical.zero.join} For all maps $f \in \X(A,B)$, $0 \smile f$ and $0 \vee f = f = f \vee 0$. 
    \end{enumerate}
Moreover, if $\X$ is a classical restriction category, then we have: 
  \begin{enumerate}[{\em (i)}]
  \setcounter{enumi}{2}
%  \item $\rest{\rest{f}^c} = \rest{f}^c$ for all $f \in \X(A,B)$.
  \item \label{lem:classical.comp.perp}$\rest{f}~\rest{f}^c = 0 = \rest{f}^c~\rest{f}$ for all $f \in \X(A,B)$. 
  \item \label{lem:classical.comp.join} $\rest{f} \vee \rest{f}^c = 1_A$ for all $f \in \X(A,B)$. 
    \item \label{lem:classical.R1} $\rest{f}^c f = 0$ for all $f \in \X(A,B)$. 
       \item \label{lem:classical.R2} $\rest{f}^c \rest{g} = \rest{g} \rest{f}^c$ for all $f \in \X(A,B)$ and $g \in \X(A,C)$. 
  \item \label{lem:classical.R3} $\rest{\rest{f}g}^c = \rest{f}^c \vee \rest{g}^c$ and $\rest{\rest{f}^c g}^c = \rest{f} \vee \rest{g}^c$ for all $f \in \X(A,B)$ and $g \in \X(A,C)$. 
\item \label{lem:classical.R4} $f \rest{g}^c = \rest{fg}^c f$ for all $f \in \X(A,B)$ and $g\in \X(B,C)$. 
\item \label{lem:classical.comp.rest.join} $\rest{f \vee g}^c = \rest{f}^c~\rest{g}^c$ for all $f,g \in \X(A,B)$ with $f \smile g$. 
  \end{enumerate}
\end{lemma}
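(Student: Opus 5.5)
The plan is to prove every part by the same two devices. The first is the definition of disjointness, $\rest{f}g = 0$. The second is the uniqueness of relative complements: to identify a map with $\rest{f}^c$, or more generally with $e \backslash d$, it suffices to exhibit a map disjoint from $d$ whose join with $d$ is $e$.

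For (\ref{lem:classical.perp.smile}), if $f \perp g$ then $\rest{f}g = 0 = \rest{g}f$, so $f \smile g$ immediately. For (\ref{lem:classical.zero.join}), $\rest{0}f = 0f = 0$ gives $0 \leq f$, hence $0 \smile f$ by Lem~\ref{lem:restriction-basics}.(\ref{lem:restriction-basics.leq.smile}). Then $f$ is an upper bound of $0$ and $f$, and it is below any upper bound, so $0 \vee f = f$; commutativity of the join gives $f \vee 0 = f$.

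In the classical case I first record that $\rest{f}^c$ is a restriction idempotent. By \textbf{[J.1]} we have $\rest{f}^c \leq \rest{f}^c \vee \rest{f} = 1_A$, so Lem~\ref{lem:restriction-basics}.(\ref{lem:restriction-basics.rest.leq.id}) gives $\rest{f}^c = \rest{\rest{f}^c}$. With this in hand:
\begin{itemize}
\item (\ref{lem:classical.comp.perp}) is the disjointness clause of the definition, using [R.\ref{R2}] for the other order.
\item (\ref{lem:classical.comp.join}) is the join clause of the definition.
\item (\ref{lem:classical.R1}) follows from $\rest{f}^c f = \rest{f}^c\rest{f}f = 0f = 0$.
\item (\ref{lem:classical.R2}) is [R.\ref{R2}] applied to the restriction idempotent $\rest{f}^c$.
\end{itemize}

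For (\ref{lem:classical.comp.rest.join}), set $e = \rest{f}^c\rest{g}^c$. It is disjoint from $\rest{f \vee g} = \rest{f} \vee \rest{g}$: multiply out using Lem~\ref{lem:restriction-basics}.(\ref{lem:restriction-basics.rest.join.composition}) and (\ref{lem:classical.comp.perp}), and use $0 \vee 0 = 0$. Its join with $\rest{f}\vee\rest{g}$ is $1_A$. To see this, write $1_A = (\rest{f}\vee\rest{f}^c)(\rest{g}\vee\rest{g}^c)$ and distribute with \textbf{[J.3]} and Lem~\ref{lem:restriction-basics}.(\ref{lem:restriction-basics.rest.join.composition}). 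The three terms $\rest{f}\rest{g}$, $\rest{f}\rest{g}^c$ and $\rest{f}^c\rest{g}$ absorb into $\rest{f}\vee\rest{g}$ via Lem~\ref{lem:restriction-basics}.(\ref{lem:restriction-basics.rest.join.idempotent.2}) and associativity. Uniqueness of the complement then gives the claim.

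Part (\ref{lem:classical.R3}) is the de Morgan dual of this. To show $\rest{\rest{f}g}^c = \rest{f}^c \vee \rest{g}^c$, note that $\rest{\rest{f}g} = \rest{f}\rest{g}$ by [R.\ref{R3}]. The two maps $\rest{f}^c$ and $\rest{g}^c$ are compatible, being restriction idempotents. Their join is disjoint from $\rest{f}\rest{g}$, and $(\rest{f}^c\vee\rest{g}^c)\vee\rest{f}\rest{g} = 1_A$ follows by the same expansion of $(\rest{f}\vee\rest{f}^c)(\rest{g}\vee\rest{g}^c)$. The second identity is the first applied with $\rest{f}^c$ in place of $\rest{f}$, using $\rest{f}^{cc} = \rest{f}$, which again holds by uniqueness.

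I expect (\ref{lem:classical.R4}) to be the main obstacle, because complements are taken relative to $1$ while the natural decomposition lives inside $\rest{f}$. Since $\rest{g}^c$ is a restriction idempotent, [R.\ref{R4}] gives $f\rest{g}^c = \rest{f\rest{g}^c}f$. Next, $f = f(\rest{g}\vee\rest{g}^c) = f\rest{g} \vee f\rest{g}^c$, so by Lem~\ref{lem:restriction-basics}.(\ref{lem:restriction-basics.rest.join.rest}) we get $\rest{f} = \rest{fg}\vee\rest{f\rest{g}^c}$, where $\rest{f\rest{g}} = \rest{fg}$ by the standard identity. The two pieces are disjoint, since their product is $\rest{f\rest{g}\rest{g}^c} = 0$. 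The same kind of computation shows $\rest{f}\,\rest{fg}^c$ is disjoint from $\rest{fg}$ with join $\rest{f}$. By uniqueness of the relative complement $\rest{f}\backslash\rest{fg}$ it follows that $\rest{f\rest{g}^c} = \rest{f}\,\rest{fg}^c$. Hence $f\rest{g}^c = \rest{fg}^c\rest{f}f = \rest{fg}^c f$.
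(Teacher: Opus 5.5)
Your proposal is correct. The paper itself gives no proof of this lemma: it quotes it from Cockett--Manes and Cockett--Lemay and uses it as a black box in Theorem~\ref{thm:classical-override}. Your argument is therefore a genuinely self-contained alternative. It rests only on the definition of $\perp$ and the uniqueness clause for relative complements, and every step checks out. That includes the least obvious item, (viii): there you show that $\rest{f\rest{g}^c}$ and $\rest{f}\,\rest{fg}^c$ are both the relative complement $\rest{f}\backslash\rest{fg}$, which is a clean way to move a complement across $f$.

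A more economical organisation is to first note that the restriction idempotents on $A$ form a Boolean algebra: meet is composition, distributivity is \textbf{[J.3]}, and complement is $\rest{f}^c$. Then (iii), (iv), (vi), (vii) and (ix) are just complement laws and De Morgan's laws, and only (v) and (viii) involve non-idempotent maps. Your four-term expansion of $(\rest{f}\vee\rest{f}^c)(\rest{g}\vee\rest{g}^c)$ is that Boolean reasoning done by hand. It costs some bookkeeping, but it makes explicit that nothing beyond the stated axioms is used.

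Three small places to tighten:
\begin{itemize}
\item In (iv), the definition gives $\rest{f}^c\vee\rest{f}=1_A$, so you also need commutativity of $\vee$.
\item In the disjointness check for (ix), the term $\rest{g}\,\rest{f}^c\rest{g}^c$ needs (vi) before (iii) applies.
\item The ``absorption'' step in (ix) is clearest written as $1_A=(\rest{f}\vee\rest{g})\vee 1_A=\bigl((\rest{f}\vee\rest{g})\vee\rest{f}\rest{g}\vee\rest{f}\rest{g}^c\vee\rest{f}^c\rest{g}\bigr)\vee\rest{f}^c\rest{g}^c=(\rest{f}\vee\rest{g})\vee\rest{f}^c\rest{g}^c$. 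The first equality is Lemma~\ref{lem:restriction-basics}(x) applied to the restriction idempotent $\rest{f}\vee\rest{g}$. The second is your four-term expansion of $1_A$ together with associativity. The third absorbs the three middle terms, again by Lemma~\ref{lem:restriction-basics}(x).
\end{itemize}
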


\begin{theorem}\label{thm:classical-override}
  A classical restriction category $\X$ has a unique override operator $\override$ defined as $f \override g := f \vee \overline{f}^c g$. Moreover, its induced update operator is $f \update g = \overline{g}^c f \vee \rest{f}g $. 
\end{theorem}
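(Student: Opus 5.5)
First I check that $f \override g := f \vee \rest{f}^c g$ is well defined. By Lem~\ref{lem:classical}.(\ref{lem:classical.R1}) we have $\rest{f}^c f = 0$. Hence $\rest{\rest{f}^c g} f = \rest{f}^c\rest{g} f = \rest{g}\rest{f}^c f = 0$, using [R.3] and Lem~\ref{lem:classical}.(\ref{lem:classical.R2}). So $f \perp \rest{f}^c g$, and Lem~\ref{lem:classical}.(\ref{lem:classical.perp.smile}) gives $f \smile \rest{f}^c g$, so the join exists.

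Next I verify the five axioms.
\begin{itemize}
\item {\bf [$\override$.4]}: by [J.3], $k(f \vee \rest{f}^c g) = kf \vee k\rest{f}^c g$. Lem~\ref{lem:classical}.(\ref{lem:classical.R4}) gives $k\rest{f}^c = \rest{kf}^c k$, which yields $kf \override kg$.
\item {\bf [$\override$.2]}: $f \vee \rest{f}^c\rest{f}g = f \vee 0 = f$ by Lem~\ref{lem:classical}.(\ref{lem:classical.comp.perp}) and (\ref{lem:classical.zero.join}).
\item {\bf [$\override$.3]}: $\rest{g}f \vee \rest{\rest{g}f}^c f$. By Lem~\ref{lem:classical}.(\ref{lem:classical.R3}) and Lem~\ref{lem:restriction-basics}.(\ref{lem:restriction-basics.rest.join.composition}), the second term is $(\rest{g}^c \vee \rest{f}^c)f = \rest{g}^c f \vee 0 = \rest{g}^c f$. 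Then $\rest{g}f \vee \rest{g}^c f = (\rest{g}\vee\rest{g}^c)f = f$ by Lem~\ref{lem:classical}.(\ref{lem:classical.comp.join}).
\item {\bf [$\override$.5]}: first $\rest{f\override g} = \rest{f} \vee \rest{f}^c\rest{g}$, by Lem~\ref{lem:restriction-basics}.(\ref{lem:restriction-basics.rest.join.rest}) and [R.3]. Post-composing with $h$ gives $\rest{f}h \vee \rest{f}^c\rest{g}h$. The right side is $\rest{f}h \vee \rest{\rest{f}h}^c \rest{g}h$, and it remains to see that $\rest{\rest{f}h}^c\rest{g}h = \rest{f}^c\rest{g}h$. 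Expand $\rest{\rest{f}h}^c = \rest{f}^c \vee \rest{h}^c$, distribute, and kill $\rest{h}^c\rest{g}h = \rest{g}\rest{h}^c h = 0$.
\item {\bf [$\override$.1]}: this is the main obstacle, so I reduce everything to the shape ``join of disjoint pieces''. The left side is $(f \vee \rest{f}^c g) \vee \rest{f \vee \rest{f}^c g}^c h$. By Lem~\ref{lem:classical}.(\ref{lem:classical.comp.rest.join}) and (\ref{lem:classical.R3}), $\rest{f \vee \rest{f}^c g}^c = \rest{f}^c\,\rest{\rest{f}^cg}^c = \rest{f}^c(\rest{f} \vee \rest{g}^c) = \rest{f}^c\rest{g}^c$, so the left side is $f \vee \rest{f}^c g \vee \rest{f}^c\rest{g}^c h$. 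The right side is $f \vee \rest{f}^c(g \vee \rest{g}^c h)$. By [J.3] this equals $f \vee \rest{f}^c g \vee \rest{f}^c\rest{g}^c h$. Associativity of joins, Lem~\ref{lem:restriction-basics}.(\ref{lem:restriction-basics.rest.join.assoc}), finishes it; pairwise compatibility holds by disjointness, checked as above.
\end{itemize}

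Uniqueness. Let $\blacktriangleright$ be any override operator. Lem~\ref{lem:override-facts}.(\ref{lem:override-facts.2}) and (\ref{lem:override-facts.11}) show that $f \blacktriangleright g$ restricts to $f$ on $\rest{f}$. Also $\rest{f}^c(f \blacktriangleright g) = \rest{f}^c f \blacktriangleright \rest{f}^c g = 0 \blacktriangleright \rest{f}^c g = \rest{f}^c g$, by [$\override$.4] and Lem~\ref{lem:override.zero}. Then
\[ f\blacktriangleright g = (\rest{f}\vee\rest{f}^c)(f\blacktriangleright g) = f \vee \rest{f}^c g, \]
using Lem~\ref{lem:classical}.(\ref{lem:classical.comp.join}) and Lem~\ref{lem:restriction-basics}.(\ref{lem:restriction-basics.rest.join.composition}).

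Update formula. Compute $f \update g = \rest{f}(g \vee \rest{g}^c f) = \rest{f}g \vee \rest{f}\rest{g}^c f$. Then commute the idempotents with Lem~\ref{lem:classical}.(\ref{lem:classical.R2}) and use [R.1] to get $\rest{g}^c f$, and commute the join by Lem~\ref{lem:restriction-basics}.(\ref{lem:restriction-basics.rest.join.commutative}).
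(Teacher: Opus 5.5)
Your proof is correct. For well-definedness, the axioms and the update formula, you follow essentially the paper's computations. For example, you reduce both sides of \textbf{[$\override$.1]} to $f \vee \rest{f}^c g \vee \rest{f}^c\rest{g}^c h$, and for \textbf{[$\override$.5]} you expand $\rest{\rest{f}h}^c = \rest{f}^c \vee \rest{h}^c$ and kill the $\rest{h}^c\rest{g}h$ term. You are in fact more complete here: you check \textbf{[$\override$.2]} explicitly, whereas the paper's item labels are shifted by one, so it never verifies $f \override \rest{f}g = f$.

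The genuine difference is in uniqueness. The paper first shows $f \leq f \blacktriangleright g$ and $\rest{f}^c g \leq f \blacktriangleright g$, and applies \textbf{[J.2]} to get $f \override g \leq f \blacktriangleright g$. It then upgrades this inequality to an equality with Lem~\ref{lem:override-coincide}, namely that any two override operators have the same restriction idempotents; that lemma in turn rests on Thm~\ref{thm:override-joins} and uniqueness of joins. You instead split the identity as $1_A = \rest{f} \vee \rest{f}^c$ and compute $f \blacktriangleright g$ outright by distributing over post-composition, Lem~\ref{lem:restriction-basics}.(\ref{lem:restriction-basics.rest.join.composition}). This uses $\rest{f}(f \blacktriangleright g) = f$ and $\rest{f}^c(f \blacktriangleright g) = 0 \blacktriangleright \rest{f}^c g = \rest{f}^c g$.

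Your route is shorter and self-contained: it needs neither \textbf{[J.2]} nor Lem~\ref{lem:override-coincide}. It also makes explicit that any override operator is forced by its behaviour on the two complementary pieces $\rest{f}$ and $\rest{f}^c$. The paper's route trades this directness for reuse of the general fact that override operators share domains of definition, turning a lower bound into an equality.
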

\begin{proof} We first need to explain why $\override$ is well-defined. However it follows from Lem~\ref{lem:classical}.(\ref{lem:classical.comp.perp}) that clearly $f \perp \overline{f}^c g$, so by Lem~\ref{lem:classical}.(\ref{lem:classical.perp.smile}) we have $f \smile \overline{f}^c g$, and thus we can take their join. So $f \override g := f \vee \overline{f}^c g$ is well-defined. We now show that this satisfies the five axioms of an override operator. 
\begin{enumerate}[{\bf [$\override$.1]}]
\item We compute: 
%\vspace{-18pt}
\begin{gather*}
   (f \override g) \override h  \overset{\text{\tiny Def.}}{=} (f \override g) \vee \rest{f \override g}^c~h \overset{\text{\tiny Def.}}{=} \left( f \vee \overline{f}^c g \right) \vee \rest{f \vee \overline{f}^c g}^c~h  \overset{\text{\tiny\ref{lem:classical}.(\ref{lem:classical.comp.rest.join}})}{=} \left( f \vee \overline{f}^c g \right) \vee \rest{f}^c~ \rest{\overline{f}^c g}^c h \overset{\text{\tiny\ref{lem:classical}.(\ref{lem:classical.R3}})}{=}  \\
   \left( f \vee \overline{f}^c g \right) \vee \rest{f}^c (\rest{f} \vee \rest{g}^c) h \overset{\textbf{[J.3]}}{=} \left( f \vee \overline{f}^c g \right) \vee \left( \rest{f}^c \rest{f} h \vee \rest{f}^c\rest{g}^c h \right) \overset{\text{\tiny\ref{lem:classical}.(\ref{lem:classical.R1}})}{=} \left( f \vee \overline{f}^c g \right) \vee \left( 0 h \vee \rest{f}^c\rest{g}^c h \right) \overset{zero}{=} \\
   \left( f \vee \overline{f}^c g \right) \vee \left( 0 \vee \rest{f}^c\rest{g}^c h \right) \overset{\text{\tiny\ref{lem:classical}.(\ref{lem:classical.zero.join}})}{=} \left( f \vee \overline{f}^c g \right) \vee \rest{f}^c\rest{g}^c h \overset{\text{\tiny\ref{lem:restriction-basics}.(\ref{lem:restriction-basics.rest.join.assoc}})}{=} f \vee \left( \overline{f}^c g \vee \rest{f}^c\rest{g}^c h \right) \overset{\textbf{[J.3]}}{=} \\
   f \vee \overline{f}^c \left( g \vee \rest{g}^c h \right) \overset{\text{\tiny Def.}}{=} f \vee \overline{f}^c (g \override h)  \overset{\text{\tiny Def.}}{=} f \override (g \override h)
\end{gather*}
\item We compute $\rest{g}f \override f \overset{\text{\tiny Def.}}{=} \rest{g}f \vee \rest{\rest{g}f}^cf\overset{\text{\tiny\ref{lem:classical}.(\ref{lem:classical.R3}})}{=}\rest{g}f \vee \left( \rest{g}^c \vee \rest{f}^c \right) f \overset{\text{\tiny\ref{lem:restriction-basics}.(\ref{lem:restriction-basics.rest.join.composition}})}{=} \left(\rest{g} \vee \left(\rest{g}^c \vee \rest{f}^c \right)\right) f \overset{\text{\tiny\ref{lem:restriction-basics}.(\ref{lem:restriction-basics.rest.join.assoc}})}{=} \left( \left( \rest{g} \vee \rest{g}^c \right) \vee \rest{f}^c \right) f \overset{\text{\tiny\ref{lem:classical}.(\ref{lem:classical.comp.join}})}{=} \left( 1_A \vee \rest{f}^c \right) f \overset{\text{\tiny\ref{lem:restriction-basics}.(\ref{lem:restriction-basics.rest.join.idempotent.2}})}{=}  1_A f = f$. 
\item We compute $f(g \override h) \overset{\text{\tiny Def.}}{=} f( g \vee \overline{g}^c h) \overset{\text{\tiny \bf[J.3]}}{=} f g \vee f\overline{g}^c h \overset{\text{\tiny\ref{lem:classical}.(\ref{lem:classical.R4}})}{=} f g \vee \overline{fg}^c fh  \overset{\text{\tiny Def.}}{=}  fg \override fh$. 
\item We compute:
%\vspace{-18pt}
\begin{gather*}
    \rest{f \override g}~h \overset{\text{\tiny Def.}}{=} \rest{f \vee \overline{f}^c g}~h \overset{\text{\tiny\ref{lem:restriction-basics}.(\ref{lem:restriction-basics.rest.join.rest}})}{=} \left( \rest{f} \vee \rest{\overline{f}^c g} \right)h  \overset{\text{\tiny\ref{lem:restriction-basics}.(\ref{lem:restriction-basics.rest.join.composition}})}{=} \rest{f}h \vee \rest{\overline{f}^c g}h \overset{\text{\tiny\ref{lem:classical}.(\ref{lem:classical.R3}})}{=} \rest{f}h \vee \overline{f}^c \rest{g} h  \overset{\text{\tiny\ref{lem:classical}.(\ref{lem:classical.zero.join}})}{=} \left (\rest{f}h \vee \overline{f}^c \rest{g} h \right) \vee 0 \\
    \overset{zero}{=} \left( \rest{f}h \vee \overline{f}^c \rest{g} h \right) \vee \rest{g}0 \overset{\text{\tiny\ref{lem:classical}.(\ref{lem:classical.R1}})}{=} \left( \rest{f}h \vee \overline{f}^c \rest{g} h \right) \vee \rest{g}\rest{h}^c h \overset{\text{\tiny\ref{lem:classical}.(\ref{lem:classical.R2}})}{=}  \left( \rest{f}h \vee \rest{f}^c\rest{g}h \right) \vee \rest{h}^c\rest{g}h  \overset{\text{\tiny\ref{lem:restriction-basics}.(\ref{lem:restriction-basics.rest.join.assoc}})}{=} \\ \rest{f}h \vee \left( \rest{f}^c\rest{g}h  \vee \rest{h}^c\rest{g}h \right) \overset{\text{\tiny\ref{lem:restriction-basics}.(\ref{lem:restriction-basics.rest.join.composition}})}{=}  \rest{f}h \vee \left( \rest{f}^c \vee \rest{h}^c \right) \rest{g}h \overset{\text{\tiny\ref{lem:classical}.(\ref{lem:classical.R3}})}{=} \rest{f}h \vee \rest{\rest{f}h}^c \rest{g}h \overset{\text{\tiny Def.}}{=} \rest{f}h \override \rest{g}h
\end{gather*}
\end{enumerate}
So we conclude that $\override$ is indeed an override operator. Now for uniqueness, suppose we have another override operator $\blacktriangleright$. Lem~\ref{lem:override-facts}.(\ref{lem:override-facts.2}) gives that $f \leq f \blacktriangleright g$, and we also compute that: 
\[\rest{\rest{f}^c g}(f \blacktriangleright g)  \overset{\text{\tiny [$\override$.\ref{override.1}]}}{=} \rest{\rest{f}^c g}f \blacktriangleright \rest{\rest{f}^c g}~g \overset{\text{\tiny [R.\ref{R3}]}}{=} \rest{f}^c\rest{g}f \blacktriangleright \rest{f}^c\rest{g}g \overset{\text{\tiny\ref{lem:classical}.(\ref{lem:classical.R2}})}{=}\rest{g}~\rest{f}^cf \blacktriangleright \rest{f}^c\rest{g}g \overset{\text{\tiny\ref{lem:classical}.(\ref{lem:classical.R1}})}{=} 0 \blacktriangleright \rest{f}^c\rest{g}g \overset{\text{\tiny\ref{lem:override.zero}.(\ref{lem:override.zero.1}})}{=} \rest{f}^c\rest{g}g \overset{\text{\tiny [R.\ref{R1}]}}{=} \rest{\rest{f}^c} g\] 
Thus $\rest{f}^c g \leq f \blacktriangleright g$. As such by \textbf{[J.2]}, we have that $f \override g = f \vee \rest{f}^c g \leq f \blacktriangleright g$, or in other words, $\overline{f \override g}(f \blacktriangleright g)$. However, recall from Lem~\ref{lem:override-coincide}, we always have that $\rest{f \override g}= \rest{f \blacktriangleright g}$ as well. Therefore by \textbf{[R.\ref{R1}]}, we get that $f \blacktriangleright g = f \override g$ as desired. Lastly, we compute the induced update operator as follows: 
\[f \diamond g \overset{\text{\tiny Def.}}{=} \rest{f}(g \override f) \overset{\text{\tiny Def.}}{=} \rest{f}(g \vee \overline{g}^c f) \overset{\text{\tiny [J.3]}}{=} \rest{f}g \vee \rest{f}\overline{g}^c f \overset{\text{\tiny\ref{lem:classical}.(\ref{lem:classical.R2}})}{=} \rest{f}g \vee \overline{g}^c\rest{f} f \overset{\text{\tiny [R.\ref{R1}]}}{=} \rest{f}g \vee \overline{g}^c f \overset{\text{\tiny\ref{lem:restriction-basics}.(\ref{lem:restriction-basics.rest.join.commutative}})}{=}  \overline{g}^c f \vee \rest{f}g\] 
So we indeed have that $f \update g = \overline{g}^c f \vee \rest{f}g$. 
\end{proof}

%\section{Future Work}
%We have introduced override restriction categories as a means to study the override and update operators in a restriction category framework, and have obtained a number of results concerning them. Our results both develop the theory of the override and update operators, and connect them to the wider body of work concerning restriction categories. We have, moreover, shown that our approach is compatible with the axioms for override and update given by Jackson and Stokes (Theorem~\ref{thm:ou-algebra}), which means that our work is in harmony with existing approaches to the study of the override and update operators. 

%Along similar lines, one might study the \emph{maximal iterate} operator~\cite{Jackson2011} in the context of override restriction categories. We imagine this to be closely connected to the notion of \emph{itegory}, which has recently been developed in order to study notions of iteration in restriction categories~\cite{Lemay2025}.
%First, some aspects of our development are unsatisfying: it would be nice to be able replace axiom {\bf [$\update$.5]} of update restriction categories (Definition~\ref{def:update-rest-cat}) with something more reasonable, and to have a better understanding of restriction categories that admit multiple override operators (Example~\ref{ex:override-not-unique}). 
%Also missing is an example of a restriction category with binary joins that does not admit an override operator, although it seems certain that such categories must exist.

\newpage

\bibliographystyle{plainurl}% the mandatory bibstyle
\bibliography{citations}

\begin{thebibliography}{10}

\bibitem{Berendsen2010}
J.~Berendsen, D.~N. Jansen, J.~Schmaltz, and F.~W. Vaandrager.
\newblock The axiomatization of override and update.
\newblock {\em Journal of Applied Logic}, 8(1):141--150, 2010.

\bibitem{Cockett2023}
J.~R.~B. Cockett and J.-S.~P. Lemay.
\newblock {Classical Distributive Restriction Categories}.
\newblock {\em Theory and Applications of Categories}, 42(6):102--144, 2024.

\bibitem{Cockett02}
J.R.B. Cockett and S.~Lack.
\newblock Restriction categories i: categories of partial maps.
\newblock {\em Theoretical Computer Science}, 270:223--259, 2002.

\bibitem{Cockett2009Boolean}
J.R.B. Cockett and E.~Manes.
\newblock Boolean and classical restriction categories.
\newblock {\em Mathematical Structures in Computer Science}, 19(2):357--416,
  2009.

\bibitem{Coc07}
J.R.B. Cockett and S.Lack.
\newblock Restriction categories iii: colimits, partial limits, and
  extensivity.
\newblock {\em Mathematical Structures in Computer Science}, 17:775--817, 2007.

\bibitem{Crans2004}
A.~S. Crans.
\newblock {\em Lie 2-algebras}.
\newblock PhD thesis, University of California, Riverside, 2004.

\bibitem{Cvetko2013}
K.~Cvetko-Vah, J.~Leech, and M.~Spinks.
\newblock Skew lattices and binary operations on functions.
\newblock {\em Journal of Applied Logic}, 11(3):253--265, 2013.

\bibitem{bakker1980mathematical}
J.~{de}~Bakker.
\newblock {\em Mathematical theory of program correctness}.
\newblock Prentice-Hall, Inc., 1980.

\bibitem{Giles2014}
B.~Giles.
\newblock {\em An investigation of some theoretical aspects of reversible
  computing}.
\newblock PhD thesis, University of Calgary, 2014.

\bibitem{Guo2012}
X.~Guo.
\newblock {\em Products, Joins, Meets, and Ranges in Restriction Categories}.
\newblock PhD thesis, University of Calgary, 2012.

\bibitem{Jackson2021}
M.~Jackson and T.~Stokes.
\newblock Override and update.
\newblock {\em Journal of Pure and Applied Algebra}, 225(3):106532, 2021.

\bibitem{Jones1990}
C.~B. Jones.
\newblock {\em Systematic software development using VDM}, volume~2.
\newblock Prentice Hall Englewood Cliffs, 1990.

\bibitem{Cockett2012-2}
X.~Guo J.R.B.~Cockett and P.J.W. Hofstra.
\newblock Range categories ii: towards regularity.
\newblock {\em Theory and Applications of Categories}, 26:453--500, 2012.

\bibitem{Prover9Mace4}
W.~McCune.
\newblock {Prover9 and Mace4}, 2009.
\newblock version 2009-11A.
\newblock URL: \url{https://www.cs.unm.edu/~mccune/prover9/}.

\bibitem{Nester2024Thesis}
C.~Nester.
\newblock {\em Partial and Relational Algebraic Theories}.
\newblock PhD thesis, Tallinn University of Technology, 2024.

\bibitem{Spivey1992}
J.~M. Spivey and J.-R. Abrial.
\newblock {\em The Z notation}, volume~29.
\newblock Prentice Hall Hemel Hempstead, 1992.

\bibitem{Stokes2024}
T.~Stokes.
\newblock Override and restricted union for partial functions.
\newblock {\em Algebra universalis}, 85(4):35, 2024.

\end{thebibliography}

\appendix

\section{Code Listing for Example~\ref{ex:override-not-unique}}\label{app:mace4-code}
\begin{verbatim}
formulas(sos).
% monoid axioms. c(-,-) is the operation and id is the neutral element. 
c(x,c(y,z)) = c(c(x,y),z) # label(COMP_ASSOC).
c(x,id) = x # label(COMP_ID_R).
c(id,x) = x # label(COMP_ID_L).
% restriction category axioms (restriction monoid axioms).
% r(x) is the domain of definition of x.
c(r(x),x) = x # label(R1).
c(r(x),r(y)) = c(r(y),r(x)) # label(R2).
r(c(r(x),y)) = c(r(x),r(y)) # label (R3).
c(x,r(y)) = c(r(c(x,y)),x) # label(R4).
% override axioms. o(x,y) is x override y.  
o(x,o(y,z)) = o(o(x,y),z) # label(OR1).
o(x,c(r(x),y)) = x # label(OR2).
o(c(r(y),x),x) = x # label(OR3).
c(x,o(y,z)) = o(c(x,y),c(x,z)) # label(OR4).
c(r(o(x,y)),z) = o(c(r(x),z),c(r(y),z)) # label(OR5).
% override axioms again. so that b(x,y) is another override operator.
b(x,b(y,z)) = b(b(x,y),z) # label(OR1B).
b(x,c(r(x),y)) = x # label(OR2B).
b(c(r(y),x),x) = x # label(OR3B).
c(x,b(y,z)) = b(c(x,y),c(x,z)) # label(OR4B).
c(r(b(x,y)),z) = b(c(r(x),z),c(r(y),z)) # label(OR5B).
end_of_list.

formulas(goals).
% the goal is to show that our two override operators are the same.
o(x,y) = b(x,y).
end_of_list.
\end{verbatim}

\end{document}